\newcommand{\sref}[1]{\hyperref[#1]{\S~\ref*{#1}}}
\newcommand{\aref}[1]{\hyperref[#1]{Appendix~\ref*{#1}}}
\newcommand{\lref}[1]{\hyperref[#1]{Lemma~\ref*{#1}}}
\newcommand{\tref}[1]{\hyperref[#1]{Theorem~\ref*{#1}}}
\newcommand{\cref}[1]{\hyperref[#1]{Corollary~\ref*{#1}}}
\newcommand{\fref}[1]{\hyperref[#1]{Figure~\ref*{#1}}}
\newcommand{\pref}[1]{\hyperref[#1]{Proposition~\ref*{#1}}}
\def\clap#1{\hbox to 0pt{\hss#1\hss}}
\def\mathllap{\mathpalette\mathllapinternal}
\def\mathllapinternal#1#2{\llap{$\mathsurround=0pt#1{#2}$}}
 \newcommand{\MRhref}[2]{\href{http://www.ams.org/mathscinet-getitem?mr=#1}{MR#2}}
\def\@strippedMR{}
\def\@scanforMR#1#2#3\endscan{%
  \ifx#1M\ifx#2R\def\@strippedMR{#3}%
  \else\def\@strippedMR{#1#2#3}%
  \fi\fi}
\def\@rst #1 #2other{#1}
\newcommand\MR[1]{\relax\ifhmode\unskip\spacefactor3000 \space\fi
  \@scanforMR#1\endscan
  \MRhref{\expandafter\@rst \@strippedMR other}{\@strippedMR}}
\newcommand\MRs[1]{\relax\ifhmode\unskip\spacefactor3000 \space\fi
  \@scanforMR#1\endscan
  \MRhref{\@strippedMR}{\@strippedMR}}
\newtheorem{theorem}{Theorem}
\numberwithin{theorem}{section}
\newtheorem{lemma}[theorem]{Lemma}
\theoremstyle{definition}
\theoremstyle{definition}
\newcommand{\g}{\mathsf{t}}
\newcommand{\DD}{\mathsf{DD}}
\newcommand{\WB}{\mathsf{WB}}
\newcommand{\BW}{\mathsf{BW}}
\newcommand{\eqz}{\stackrel{\g}{\equiv}}
\newcommand{\R}{\mathbb{R}}
\newcommand{\G}{\mathcal G}
\newcommand{\No}{\mathbf{N}}
\newcommand{\ZDD}{Z^{\DD}}
\newcommand{\ZBW}{Z^{\BW}}
\newcommand{\ZWB}{Z^{\WB}}
\newcommand{\KL}{L}
\newcommand{\eps}{\varepsilon}
\newcommand{\Pfd}{\operatorname{Pfd}}
\newcommand{\Pf}{\operatorname{Pf}}
\newcommand{\old}[1]{}
\renewcommand{\th}{\ensuremath{^{\text{th}}}\xspace}
\newcommand{\st}{\ensuremath{^{\text{st}}}\xspace}
\newcommand{\unc}{\operatorname{uncrossing}}
\newcommand{\tree}{\operatorname{tree}}
\newcommand{\CdV}{MR1652692}
\newcommand{\CdVGV}{MR1371682}
\newcommand{\DGG}{MR1462755}
\newcommand{\CIM}{MR1657214}
\newcommand{\Fomin}{MR1837248}
\newcommand{\Kasteleyn}{MR0253689}
\newcommand{\CS}{MR2097339}
\newcommand{\Pu}{\ddddot\Pr}
\newcommand{\hPr}{\widehat\Pr}
\newcommand{\pu}[1]{\Pu(#1)}
\newcommand{\plt}{\phantom{+}t}
\newcommand{\fm}{\phantom{-}}
\newcommand{\p}{{\mathcal P}^{(\g)}}
\begin{document}
\title{Combinatorics of Tripartite Boundary Connections \\ for Trees and Dimers
\footnotetext{2000 \textit{Mathematics Subject Classification.}  60C05, 82B20, 05C05, 05C50.}
\footnotetext{\textit{Key words and phrases.}  Tree, grove, double-dimer model, Dirichlet-to-Neumann matrix, Pfaffian.}
}
\author{Richard W. Kenyon\thanks{Brown University, Providence, RI} \and David B. Wilson\thanks{Microsoft Research, Redmond, WA}}
\date{}
\maketitle

\begin{abstract}
  A grove is a spanning forest of a planar graph in which every
  component tree contains at least one of a special subset of vertices
  on the outer face called nodes.  For the natural probability measure
  on groves, we compute various connection probabilities for the nodes
  in a random grove.  In particular, for ``tripartite'' pairings of
  the nodes, the probability can be computed as a Pfaffian in the
  entries of the Dirichlet-to-Neumann matrix (discrete Hilbert
  transform) of the graph.  These formulas generalize the determinant
  formulas given by Curtis, Ingerman, and Morrow, and by Fomin, for
  parallel pairings.  These Pfaffian formulas are used to give exact
  expressions for reconstruction: reconstructing the conductances of a
  planar graph from boundary measurements.  We prove similar theorems
  for the double-dimer model on bipartite planar graphs.
\end{abstract}

\section{Introduction}

In a companion paper \cite{KW:polynomial} we studied two probability
models on finite planar graphs: groves and the double-dimer model.

\subsection{Groves}

Given a finite planar graph and a set of vertices on the outer face,
referred to as nodes, a \textbf{grove} is a spanning forest in which
every component tree contains at least one of the nodes.  A grove
defines a partition of the nodes: two nodes are in the same part if
and only if they are in the same component tree of the grove.
See \fref{grove}.

When the edges of the graph are weighted, one defines a probability
measure on groves, where the probability of a grove is proportional to
the product of its edge weights.  We proved in \cite{KW:polynomial}
that the connection probabilities---the partition of nodes determined
by a random grove---could be computed in terms of certain ``boundary''
measurements.  Explicitly, one can think of the graph as a resistor
network in which the edge weights are conductances.  Suppose the nodes
are numbered in counterclockwise order.  The $L$ matrix, or
Dirichlet-to-Neumann matrix\footnote{Our $L$ matrix is the negative of
  the Dirichlet-to-Neumann matrix of \cite{\CdV}.} (also known as the
response matrix or discrete Hilbert transform), is then the function
$L=(L_{i,j})$ indexed by the nodes, with $L v$ being the vector of net
currents out of the nodes when $v$ is a vector of potentials applied
to the nodes (and no current loss occurs at the internal vertices).
For any partition $\pi$ of the nodes, the probability that a random
grove has partition $\pi$ is $$\Pr(\pi)=\pu{\pi}\Pr(1|2|\cdots|n),$$
where $1|2|\cdots|n$ is the partition which connects no nodes, and
$\pu{\pi}$ is a polynomial in the entries $L_{i,j}$ with integer
coefficients (we think of it as a normalized probability,
$\pu{\pi}=\Pr(\pi)/\Pr(1|2|\cdots|n)$, hence the notation).  In
\cite{KW:polynomial} we showed how the polynomials $\pu{\pi}$ could be
constructed explicitly as integer linear combinations of elementary
polynomials.

\begin{figure}[t!]
\centerline{
\includegraphics{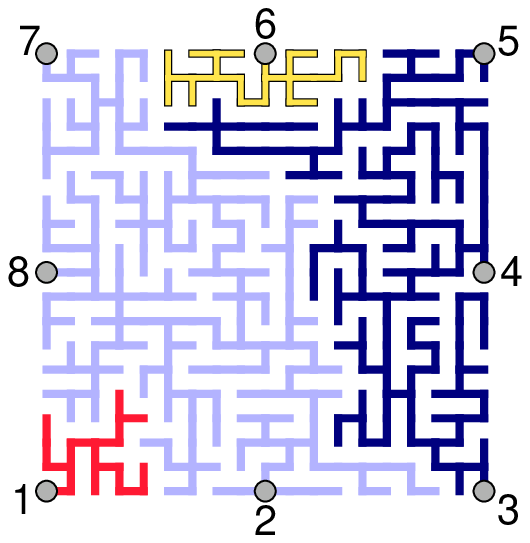}\hfil\includegraphics[viewport=20 -45 11 -29]{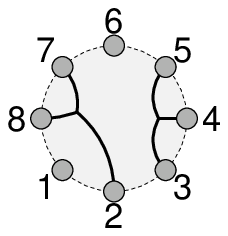}}
\caption{
  A random grove (left) of a rectangular grid with $8$ nodes on the
  outer face.  In this grove there are $4$ trees (each colored
  differently), and the partition of the nodes is $\{\{1\}, \{2,7,8\},
  \{3,4,5\}, \{6\}\}$, which we write as $1|278|345|6$, and illustrate
  schematically as shown on the right.
\label{grove}}
\end{figure}

For certain partitions~$\pi$, however, there is a simpler formula for
$\pu{\pi}$: for example, Curtis, Ingerman, and Morrow \cite{\CIM}, and
Fomin \cite{\Fomin}, showed that for certain partitions~$\pi$,
$\pu{\pi}$ is a determinant of a submatrix of $L$.  We generalize
these results in several ways.

Firstly, we give an interpretation (\sref{minors}) of every minor
of~$L$ in terms of grove probabilities.  This is analogous to the
all-minors matrix-tree theorem \cite{chaiken}
\cite[pg.~313 Ex.~4.12--4.16, pg.~295]{chen}, except that the
matrix entries are entries of the response matrix rather than edge
weights, so in fact the all-minors matrix-tree theorem is a special
case.

Secondly, we consider the case of \textbf{tripartite}
partitions~$\pi$ (see \fref{rgb-tripartite}),
 showing that the grove probabilities $\pu{\pi}$ can be written
as the Pfaffian of an antisymmetric matrix derived from the $L$
matrix.  One motivation for studying tripartite partitions is the work
of Carroll and Speyer \cite{MR2097339} and Petersen and Speyer
\cite{MR2144860} on so-called Carroll-Speyer groves (\fref{csgrove})
which arose in their study of the cube recurrence.  Our tripartite
groves directly generalize theirs.  See \sref{CSgroves}.

\begin{figure}[b!]
\includegraphics{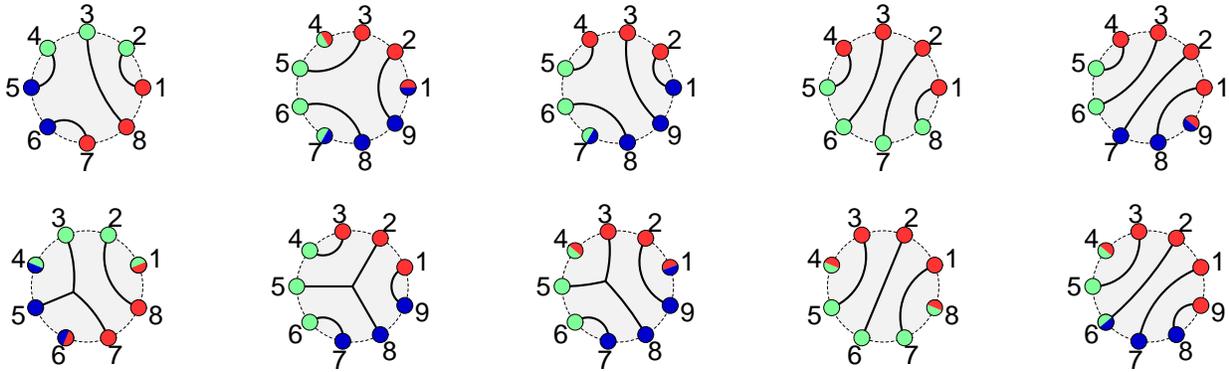}
\caption{
  Illustration of tripartite partitions.  The two partitions in each
  column are duals of one another.  The nodes come in three colors,
  red, green, and blue, which are arranged contiguously on the outer
  face; a node may be split between two colors if it occurs at the
  transition between these colors.  Assuming the number of nodes of
  each color (where split nodes count as half) satisfies the triangle
  inequality, there is a unique noncrossing partition with a minimal
  number of parts in which no part contains nodes of the same color.
  This partition is called the tripartite partition, and is
  essentially a pairing, except that there may be singleton nodes
  (where the colors transition), and there may be a (unique) part of
  size three.  If there is a part of size three, we call the partition
  a \textbf{tripod}.  If one of the color classes is empty (or the triangle
  inequality is tight), then the partition is the ``parallel crossing''
  studied in \cite{\CIM} and \cite{\Fomin}.
\label{rgb-tripartite}}
\end{figure}

A third motivation is the conductance reconstruction problem.  Under
what circumstances does the response matrix ($L$ matrix), which is a
function of boundary measurements, determine the conductances on the
underlying graph?  This question was studied in \cite{\CIM, \CdV,
  \CdVGV}.  Necessary and sufficient conditions are given in
\cite{\CdVGV} for two planar graphs on $n$ nodes to have the same
response matrix.  In \cite{\CdV} it was shown which matrices arise as
response matrices of planar graphs.  Given a response matrix~$L$
satisfying the necessary conditions, in \sref{reconstruction} we use
the tripartite grove probabilities to give explicit formulas for the
conductances on a standard graph whose response matrix is $L$.  This
question was first solved in \cite{\CIM}, who gave an algorithm for
recursively computing the conductances, and was studied further in
\cite{card-muranaka,russell}.  In contrast, our formulas are
explicit.

\subsection{Double-dimer model}

A number of these results extend to another probability model, the
double-dimer model on bipartite planar graphs, also discussed in
\cite{KW:polynomial}.

Let $\G$ be a finite bipartite graph\footnote{Bipartite means that the
  vertices can be colored black and white such that adjacent vertices
  have different colors.} embedded in the plane with a set~$N$ of $2n$
distinguished vertices (referred to as nodes) which are on the outer
face of $\G$ and numbered in counterclockwise order.  One can consider
a multiset (a subset with multiplicities) of the edges of $\G$ with the
property that each vertex except the nodes is the endpoint of exactly
two edges, and the nodes are endpoints of exactly one edge in the
multiset.  In other words, it is a subgraph of degree~$2$ at the
internal vertices, degree~$1$ at the nodes, except for possibly having
some doubled edges.  Such a configuration is called a double-dimer
configuration; it will connect the nodes in pairs.

If edges of $\G$ are weighted with positive real weights, one defines a
probability measure in which the probability of a configuration is a
constant times the product of weights of its edges (and doubled edges
are counted twice), times $2^\ell$ where $\ell$ is the number of loops
(doubled edges do not count as loops).

We proved in \cite{KW:polynomial} that the connection
probabilities---the matching of nodes determined by a random
configuration---could be computed in terms of certain boundary
measurements.

Let $\ZDD(\G,\No)$ be the weighted sum of all double-dimer
configurations.  Let $\G^\BW$ be the subgraph of $\G$ formed by
deleting the nodes except the ones that are black and odd or white and
even, and let $\G^\BW_{i,j}$ be defined as $\G^\BW$ was, but with
nodes $i$ and $j$ included in $\G^\BW_{i,j}$ if and only if they were
not included in $\G^\BW$.

A dimer cover, or perfect matching, of a graph is a set of edges whose
endpoints cover the vertices exactly once.  When the graph has weighted
edges, the weight of a dimer configuration is the product of its edge
weights.  Let $\ZBW$ and $\ZBW_{i,j}$ be the weighted sum of dimer
configurations of $\G^\BW$ an $\G^\BW_{i,j}$, respectively, and define
$\ZWB$ and $\ZWB_{i,j}$ similarly but with the roles of black and
white reversed.  Each of these quantities can be computed via
determinants, see \cite{\Kasteleyn}.

One can easily show that $\ZDD = \ZBW \ZWB$; this is essentially
equivalent to Ciucu's graph factorization theorem \cite{ciucu}.
(The two dimer configurations in \fref{ddimer-dimer} are on the
graphs $\G^\BW$ and $\G^\WB$.)
The variables that play the role of $L_{i,j}$ in groves are defined by
$$X_{i,j} = \ZBW_{i,j} / \ZBW.$$
\begin{figure}[htbp]
\includegraphics[height=0.31\textwidth]{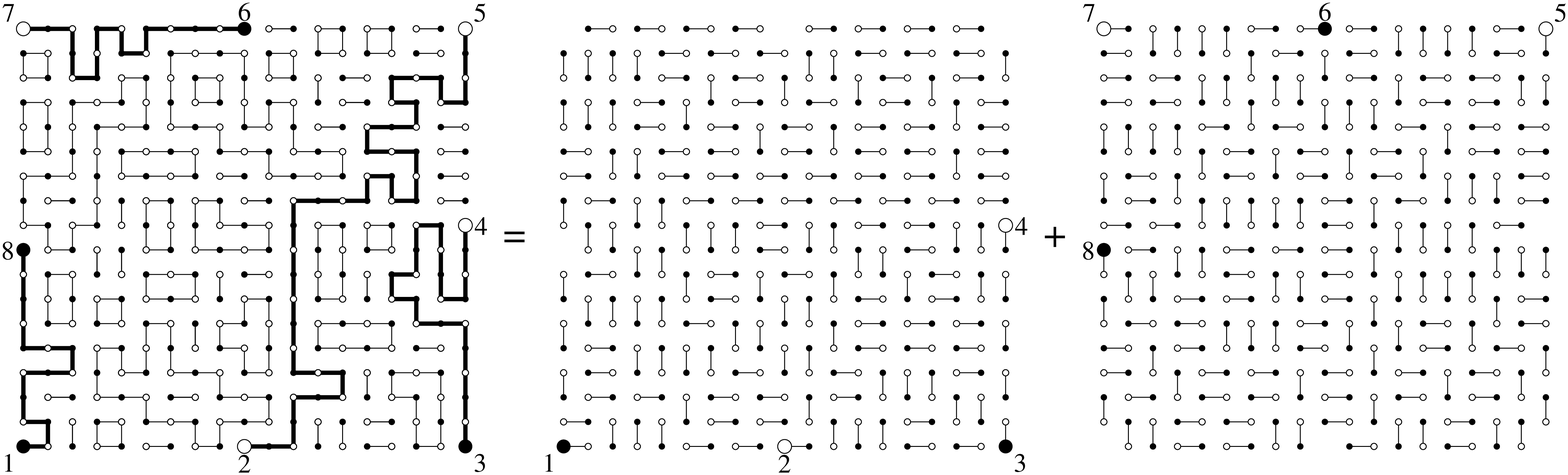}
\caption{A double-dimer configuration is a union of two dimer configurations.
\label{ddimer-dimer}}
\end{figure}

We showed in \cite{KW:polynomial} that for each matching~$\pi$, the
normalized probability $\hPr(\pi)=\Pr(\pi)\ZWB/\ZBW$ that a random
double-dimer configuration connects nodes in the matching~$\pi$, is an
integer polynomial in the quantities $X_{i,j}$.

In the present paper, we show in \tref{dd-tripartite} that
when~$\pi$ is a tripartite pairing, that is, the nodes are divided
into three consecutive intervals around the boundary and no node is
paired with a node in the same interval, $\hPr(\pi)$ is a determinant
of a matrix whose entries are the $X_{i,j}$'s or $0$.

\subsection{Conductance reconstruction}

Recall that an electrical transformation of a resistor network is a
local rearrangement of the type shown in \fref{electricalmoves}.
These transformations do not change the response matrix of the graph.
\cite{\CdVGV} showed that a planar graph with $n$ nodes can be reduced,
using electrical transformations, to a standard graph~$\Sigma_n$
(shown in \fref{stdgraphs} for $n$ up to $6$), or a minor of one of
these graphs (obtained from $\Sigma_n$ by deletion/contraction of
edges).

\begin{figure}[b!]
\psfrag{=}[bc][Bc][1][0]{$\Leftrightarrow$}
\psfrag{a}[bc][Bc][1][0]{$a$}
\psfrag{b}[bc][Bc][1][0]{$b$}
\psfrag{c}[bc][Bc][1][0]{$c$}
\psfrag{a+b}[bc][Bc][1][0]{$a+b$}
\psfrag{ab/(a+b)}[bc][Bc][1][0]{\small $ab/(a+b)$}
\psfrag{ab/(a+b+c)}[bc][Bc][1][0]{$\frac{ab}{a+b+c}$}
\psfrag{ac/(a+b+c)}[bc][Bc][1][0]{$\frac{ac}{a+b+c}$}
\psfrag{bc/(a+b+c)}[tc][Bc][1][0]{$\frac{bc}{a+b+c}$}
\includegraphics{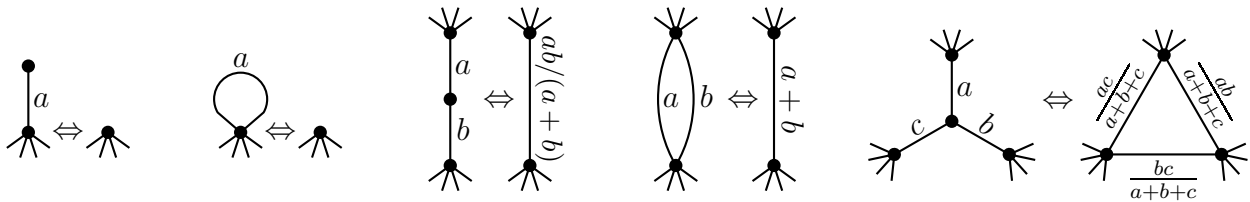}
\caption{Local graph transformations that preserve the
  electrical response matrix of the graph; the edge weights are the
  conductances.  These transformations also preserve the connection
  probabilities of random groves, though some of the transformations
  scale the weighted sum of groves.  Any planar graph with $n$ nodes
  can be transformed to a minor of the ``standard graph''~$\Sigma_n$
  (\fref{stdgraphs}) via these transformations \cite{\CdVGV}.
\label{electricalmoves}}
\end{figure}

\begin{figure}[htbp]
\psfrag{G1}[tc][tc][1][0]{$\Sigma_1$}
\psfrag{G2}[tc][tc][1][0]{$\Sigma_2$}
\psfrag{G3}[tc][tc][1][0]{$\Sigma_3$}
\psfrag{G4}[tc][tc][1][0]{$\Sigma_4$}
\psfrag{G5}[tc][tc][1][0]{$\Sigma_5$}
\psfrag{G6}[tc][tc][1][0]{$\Sigma_6$}
\centerline{\includegraphics{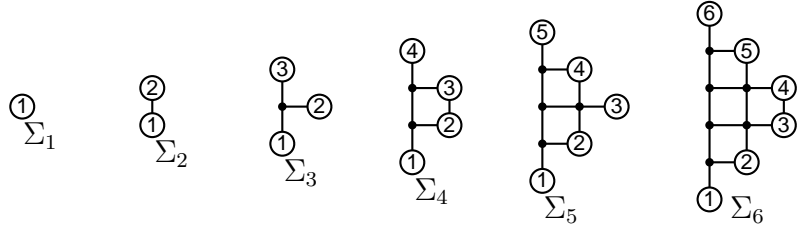}}
\caption{Standard graphs $\Sigma_n$ with $n$ nodes for $n$ up to $6$.
\label{standardgraphs}\label{stdgraphs}}
\end{figure}

In particular the response matrix of any planar graph on $n$ nodes is
the same as that for a minor of the
standard graph~$\Sigma_n$ (with certain
conductances).  \cite{\CdV} computed which matrices occur as response
matrices of a planar graph.  \cite{\CIM} showed how to reconstruct
recursively the edge conductances of $\Sigma_n$ from the response
matrix, and the reconstruction problem was also studied in
\cite{card-muranaka} and \cite{russell}.  Here we give an explicit
formula for the conductances as ratios of Pfaffians of matrices
derived from the $L$ matrix and its inverse.  These Pfaffians are
irreducible polynomials in the matrix entries (\tref{irreducible}),
so this is in some sense the minimal expression for
the conductances in terms of the $L_{i,j}$.

\section{Background}

Here we collect the relevant facts from \cite{KW:polynomial}.
\enlargethispage{12pt}

\subsection{Partitions}

We assume that the nodes are labeled $1$ through $n$ counterclockwise
around the boundary of the graph~$\G$.  We denote a partition of the
nodes by the sequences of connected nodes, for example $1|234$ denotes
the partition consisting of the parts $\{1\}$ and $\{2,3,4\}$, i.e.,
where nodes $2$, $3$, and $4$ are connected to each other but not to
node $1$.  A partition is crossing if it contains four items $a<b<c<d$
such that $a$ and $c$ are in the same part, $b$ and $d$ are in the same
part, and these two parts are different.  A partition is planar if and only if
it is non-crossing, that is,
it can be represented by arranging the items in order on a
circle, and placing a disjoint collection of connected sets in the
disk such that items are in the same part of the partition when they
are in the same connected set.    For example $13|24$ is the only non-planar
partition on $4$ nodes.

\subsection{Bilinear form and projection matrix}

Let $W_n$ be the vector space consisting of formal linear combinations
of partitions of $\{1,2,\dots,n\}$.  Let $U_n\subset W_n$ be the subspace
consisting of formal linear combinations of planar partitions.

On $W_n$ we define a bilinear form: if $\tau$ and $\sigma$ are
partitions, $\langle \tau,\sigma\rangle_\g$ takes value $1$ or $0$ and
is equal to~$1$ if and only if the following two conditions are
satisfied:
\begin{enumerate}
\item The number of parts of $\tau$ and $\sigma$ add up to $n+1$.
\item The transitive closure of the
relation on the nodes defined by the union of $\tau$ and $\sigma$ has a single
equivalence class.
\end{enumerate}
For example $\langle 123|4,24|1|3\rangle_\g=1$ but $\langle
12|34,12|3|4\rangle_\g=0.$ (We write the subscript $\g$ to distinguish
this form from ones that arise in the double-dimer model in
\sref{ddsection}.)

This form, restricted to the subspace $U_n$, is essentially the
``meander matrix'', see \cite{KW:polynomial,\DGG}, and has non-zero
determinant.  Hence the bilinear form is non-degenerate on $U_n$.  We
showed in \cite{KW:polynomial}, Proposition 2.6, that $W_n$ is the
direct sum of $U_n$ and a subspace~$K_n$ on which
$\langle,\!\rangle_\g$ is identically zero.  In other words, the rank
of $\langle,\!\rangle_\g $ is the $n$\th Catalan number~$C_n$, which
is the dimension of $U_n$.  Projection to $U_n$ along the kernel~$K_n$
associates to each partition~$\tau$ a linear combination of planar
partitions.  The matrix of this projection is called~$\p$.  It has
integer entries \cite{KW:polynomial}.  Observe that $\p$ preserves the
number of parts of a partition: each non-planar partition with $k$
parts projects to a linear combination of planar partitions with $k$
parts (this follows from condition 1 above).

\subsection{Equivalences}

The rows of the projection matrix~$\p$ determine the crossing probabilities,
see \tref{KW1mainthm} below.  In this section we give tools for
computing columns of~$\p$.

We say two elements of $W_n$ are equivalent ($\eqz$) if their
difference is in $K_n$, that is, their inner product with any
partition is equal.  We have, for example,
\begin{lemma} \label{cross}
  $1|234 + 2|134 + 3|124 + 4|123 \eqz 12|34 + 13|24 + 14|23$
\end{lemma}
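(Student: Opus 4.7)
The plan is to show that the difference of the two sides lies in $K_4$, i.e., that its inner product $\langle\,\cdot\,,\sigma\rangle_\g$ with every partition $\sigma$ of $\{1,2,3,4\}$ vanishes. Since every summand on either side is a two-part partition, condition~(1) of the bilinear form (part counts must sum to $n+1=5$) means only $\sigma$'s with exactly three parts can give a non-zero contribution. Any such $\sigma$ consists of one doubleton and two singletons; there are six such partitions, and they form a single orbit under the natural $S_4$-action on the node labels.

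Next I would exploit symmetry. Each side is manifestly $S_4$-invariant: the LHS is the sum over \emph{all} four ``singleton$|$triple'' partitions, and the RHS is the sum over \emph{all} three ``pair$|$pair'' partitions. Since the bilinear form is also $S_4$-invariant (conditions~(1) and~(2) do not depend on labels), the inner products $\langle \LHS,\sigma\rangle_\g$ and $\langle \RHS,\sigma\rangle_\g$ depend only on the $S_4$-orbit of $\sigma$. Hence it suffices to verify the desired equality for a single representative three-part partition, which I would take to be $\sigma = 12|3|4$.

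The final step is a direct evaluation of $\langle\,\cdot\,,12|3|4\rangle_\g$ on each side by checking condition~(2) (that the transitive closure of the union has a single equivalence class). On the LHS: $1|234$ and $2|134$ each contribute $1$ (appending the edge $\{1,2\}$ from $\sigma$ to the triple's complete graph still connects all four nodes), whereas $3|124$ leaves $\{3\}$ isolated and $4|123$ leaves $\{4\}$ isolated; total $2$. On the RHS: $13|24$ and $14|23$ each contribute $1$ (appending $\{1,2\}$ joins the two pairs into one class), whereas $12|34$ leaves $\{1,2\}$ and $\{3,4\}$ disconnected; total $2$. The totals agree, which is what was needed.

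The only real obstacle is bookkeeping. The conceptual content is entirely in the two reductions — ``$\sigma$ must have three parts'' and ``$S_4$-symmetry collapses six cases to one'' — after which the lemma reduces to a single hand-verified inner product. One could alternatively enumerate all six three-part~$\sigma$'s directly, which would be slightly tedious but involves no new idea.
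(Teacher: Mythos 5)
Your proof is correct: by the paper's definition of $\eqz$, it suffices to show both sides have equal inner product with every partition, condition~(1) of the form restricts attention to the six three-part partitions of $\{1,2,3,4\}$, and the $S_4$-invariance of both sides and of the form legitimately collapses these to the single check against $12|3|4$, which you evaluate correctly (your value $\langle 12|34,12|3|4\rangle_\g=0$ even matches the worked example in the text). The present paper states this lemma without proof, importing it from the companion paper, so there is no in-paper argument to compare against; your verification is a complete and clean substitute given the definitions supplied here.
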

which is another way of saying that
$$\p(13|24)=1|234 + 2|134 + 3|124 + 4|123-12|34-14|23.$$

This lemma, together with the following two equivalences, will allow
us to write any partition as an equivalent sum of planar partitions.
That is, it allows us to compute the columns of $\p$.

\begin{lemma} \label{newpart}
  Suppose $n\geq2$, $\tau$ is a partition of $1,\dots,n-1$, and $\tau
  \eqz \sum_\sigma \alpha_\sigma \sigma$.  Then
  $$\tau|n \eqz \sum_\sigma \alpha_\sigma \sigma|n.$$
\end{lemma}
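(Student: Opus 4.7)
The plan is to verify the equivalence by testing both sides against an arbitrary partition $\pi$ of $\{1,\dots,n\}$, reducing the inner product calculation in $W_n$ to one in $W_{n-1}$ where the hypothesis applies. By the definition of $\eqz$, it suffices to prove that
\[ \langle \tau|n,\pi\rangle_\g = \sum_\sigma \alpha_\sigma \langle \sigma|n,\pi\rangle_\g \]
for every partition $\pi$ of $\{1,\dots,n\}$.

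The key case distinction is whether $\{n\}$ is a singleton part of $\pi$ or not. If it is, then in the union of $\tau|n$ and $\pi$ the element $n$ is completely isolated (since $\tau|n$ also contributes only the singleton $\{n\}$), so the transitive closure cannot consist of a single equivalence class on all of $\{1,\dots,n\}$, forcing $\langle \tau|n,\pi\rangle_\g = 0$; the same argument kills each $\langle \sigma|n,\pi\rangle_\g$, and both sides are zero.

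In the remaining case, let $P$ be the part of $\pi$ containing $n$ with $|P|\geq 2$, and let $\pi^-$ denote the partition of $\{1,\dots,n-1\}$ obtained by deleting $n$ from $P$ (so $|\pi^-|=|\pi|$). I claim that for any partition $\mu$ of $\{1,\dots,n-1\}$,
\[ \langle \mu|n,\pi\rangle_\g = \langle \mu,\pi^-\rangle_\g. \]
To see this, check the two defining conditions of $\langle\cdot,\cdot\rangle_\g$: the part-count condition $|\mu|n|+|\pi|=n+1$ is literally the same as $|\mu|+|\pi^-|=(n-1)+1$; and for the transitive-closure condition, since $n$ is already linked to $P\setminus\{n\}$ by $\pi$ and the only occurrence of $n$ in $\mu|n$ is as a singleton, the combined relation on $\{1,\dots,n\}$ has a single class exactly when the combined relation $\mu\cup\pi^-$ on $\{1,\dots,n-1\}$ has a single class.

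Applying this identity to $\tau$ and to each $\sigma$ reduces the desired equation to
\[ \langle \tau,\pi^-\rangle_\g = \sum_\sigma \alpha_\sigma \langle \sigma,\pi^-\rangle_\g, \]
which is exactly the hypothesis $\tau \eqz \sum_\sigma \alpha_\sigma \sigma$ in $W_{n-1}$ tested against $\pi^-$. The argument is really a routine case analysis; the only point that requires attention is the bookkeeping in the reduction step, namely that appending the singleton $\{n\}$ to the left factor and ensuring $n$ is not isolated in the right factor preserves both the parity of part counts and the transitive-closure condition.
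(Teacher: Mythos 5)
Your proof is correct and complete: reducing $\eqz$ to equality of pairings against an arbitrary partition $\pi$ of $\{1,\dots,n\}$, disposing of the case where $\{n\}$ is a singleton of $\pi$ (both sides vanish since $n\geq 2$), and in the remaining case checking that $\langle \mu|n,\pi\rangle_\g=\langle\mu,\pi^-\rangle_\g$ via the part-count and transitive-closure conditions is exactly the natural argument, and your bookkeeping ($|\pi^-|=|\pi|$ because the part containing $n$ has size at least two) is right. Note that the present paper states this lemma as background imported from the companion paper \cite{KW:polynomial} and gives no proof here, so there is nothing to compare against; your verification stands on its own and matches the standard duality-testing approach used for such equivalences.
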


If $\tau$ is a partition of $1,\dots, n-1$, we can insert $n$ into
the part containing item $j$ to get a partition of $1,\dots,n$.

\begin{lemma} \label{split}
  Suppose $n\geq2$, $\tau$ is a partition of $1,\dots,n-1$, $1\leq j<n$,
  and $\tau \eqz \sum_\sigma \alpha_\sigma \sigma$.  Then
 $$\text{\rm [$\tau$ with $n$
  inserted into $j$'s part]} \eqz \sum_\sigma \alpha_\sigma \text{\rm [$\sigma$ with
    $n$ inserted into $j$'s part]}.$$
\end{lemma}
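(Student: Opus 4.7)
The plan is to reduce the lemma to an equality of pairings. Writing $\tau^{+j}$ for $\tau$ with $n$ inserted into $j$'s part, the operation $\tau\mapsto\tau^{+j}$ extends linearly to a map $W_{n-1}\to W_n$, and the lemma is exactly the statement that this map carries $K_{n-1}$ into $K_n$. It therefore suffices to prove that for each partition $\rho$ of $\{1,\ldots,n\}$, the linear functional $\tau\mapsto\langle\tau^{+j},\rho\rangle_\g$ on $W_{n-1}$ is either identically zero or equals $\tau\mapsto\langle\tau,\rho^*\rangle_\g$ for some specific partition $\rho^*$ of $\{1,\ldots,n-1\}$; in either case the functional annihilates $K_{n-1}$, so the assumed $\tau\eqz\sum_\sigma\alpha_\sigma\sigma$ forces $\tau^{+j}\eqz\sum_\sigma\alpha_\sigma\sigma^{+j}$.

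I would find the appropriate $\rho^*$ by a case analysis on how $n$ sits in $\rho$. \emph{Case A:} $\{n\}$ is a singleton of $\rho$. Set $\rho^*=\rho\setminus\{n\}$. Since $|\tau^{+j}|=|\tau|$ and $|\rho|=|\rho^*|+1$, the part-count conditions in the two pairings match, and since $n$ is adjacent only to $j$ in $\tau^{+j}\cup\rho$, the transitive closure is connected on $\{1,\ldots,n\}$ iff $\tau\cup\rho^*$ is connected on $\{1,\ldots,n-1\}$; thus $\langle\tau^{+j},\rho\rangle_\g=\langle\tau,\rho^*\rangle_\g$.

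\emph{Case B:} $n$ lies in a part $P$ of $\rho$ with $|P|\ge 2$. Write $\rho'$ for $\rho$ with $n$ deleted from $P$, so that $|\rho'|=|\rho|$ and $\tau^{+j}\cup\rho$ restricted to $\{1,\ldots,n-1\}$ equals $\tau\cup\rho'$; the vertex $n$ is adjacent to $j$'s $\tau$-part together with $P\setminus\{n\}$. \emph{Subcase B1} ($j\in P$): all of $n$'s neighbors then lie in a single class of $\tau\cup\rho'$ (namely $j$'s class, which contains both $j$'s $\tau$-part and the $\rho'$-part $P\setminus\{n\}$), so the full union is connected iff $\tau\cup\rho'$ is connected on $\{1,\ldots,n-1\}$; but the part-count equation forces $|\tau|+|\rho'|=n+1$, and the standard bound---that the number of classes of $\tau\vee\rho'$ is at least $|\tau|+|\rho'|-(n-1)$, obtained by viewing them as connected components of the bipartite graph whose vertices are the parts of $\tau$ and $\rho'$ and whose edges are the elements of $\{1,\ldots,n-1\}$---gives at least two classes, so $\langle\tau^{+j},\rho\rangle_\g$ is identically zero. \emph{Subcase B2} ($j\notin P$): take $\rho^*$ to be $\rho'$ with $j$'s part merged with $P\setminus\{n\}$. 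Merging two distinct parts drops the count by one, so $|\tau|+|\rho^*|=n$ exactly when $|\tau|+|\rho|=n+1$, and $\tau\cup\rho^*$ differs from $\tau\cup\rho'$ only by fusing the classes of $j$ and of $P\setminus\{n\}$, which is precisely the effect of attaching $n$ to both in $\tau^{+j}\cup\rho$.

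The main obstacle is Subcase B1: vanishing there is obtained not by matching to a specific $\rho^*$ but from the partition-join counting inequality, which must be stated and applied with care (and one should check the edge cases $P=\{j,n\}$ and $j$'s $\tau$-part being just $\{j\}$ fall into the argument cleanly). Cases A and B2 are direct part-and-connectivity bookkeeping once $\rho^*$ is identified, and the three cases exhaust every possibility for how $n$ appears in $\rho$, giving the lemma.
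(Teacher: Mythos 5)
Your proof is correct. Note that there is no in-paper argument to compare against: the paper states this lemma as background imported from the companion paper \cite{KW:polynomial}, so your write-up has to stand on its own, and it does. The key observation—that insertion of $n$ into $j$'s part is ``adjoint'' to an explicit operation on test partitions—is carried out correctly: for a partition $\rho$ of $\{1,\dots,n\}$ with $n$ a singleton you delete it; with $n$ in a part $P\not\ni j$ you delete $n$ and merge $j$'s part of $\rho$ with $P\setminus\{n\}$; and with $j\in P$ the functional $\tau\mapsto\langle\tau^{+j},\rho\rangle_\g$ vanishes identically, since when the part-count condition $|\tau|+|\rho|=n+1$ holds, the bipartite-graph bound shows $\tau\cup\rho'$ has at least $|\tau|+|\rho'|-(n-1)=2$ classes while $n$ attaches only to the class of $j$, so connectivity fails. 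In Cases A and B2 the part counts shift by exactly one in step with the change from $n$ to $n-1$ ground elements, and the connectivity equivalences you assert are right (including the edge cases $P=\{j,n\}$ and $\{j\}$ a singleton of $\tau$). Since $\tau-\sum_\sigma\alpha_\sigma\sigma$ lies in the radical $K_{n-1}$, every such functional annihilates it, giving $\tau^{+j}\eqz\sum_\sigma\alpha_\sigma\sigma^{+j}$ as claimed; this matches the spirit of how these equivalence rules are verified in the companion paper, namely by testing against arbitrary partitions via the two defining conditions of $\langle\cdot,\cdot\rangle_\g$.
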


One more lemma is quite useful for computations.

\begin{lemma}[\relax{\cite[Lemma~4.1]{KW:polynomial}}]
\label{drop-singletons}
  If a planar partition $\sigma$ contains only singleton and doubleton parts,
  and $\sigma'$ is the partition obtained from $\sigma$ by deleting all
  the singleton parts, then the rows of the matrices $\p$ for $\sigma$ and
  $\sigma'$ are equal, in the sense that they have the same non-zero entries
  (when the columns are matched accordingly by deleting
  the corresponding singletons).
\end{lemma}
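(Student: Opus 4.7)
The plan is to prove the lemma by reducing to the case of a single singleton, then splitting into two subcases according to whether the singleton of~$\sigma$ is also a singleton of~$\tau$. The hard direction---showing vanishing outside the matching columns---will require a strengthened inductive claim that tracks the part sizes created by planarization.

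First I would peel off one singleton at a time, so that it suffices to handle the case where $\sigma$ has a single singleton. Using the cyclic symmetry of $\langle,\!\rangle_\g$ and of planarity under a rotation of the node labels, I may place the singleton at position~$n$ and write $\sigma=\sigma_{0}|n$, where $\sigma_{0}$ is a planar pairing of $\{1,\dots,n-1\}$ (doubletons only, by hypothesis on $\sigma$).

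Next, fix an arbitrary partition $\tau$ of $\{1,\dots,n\}$. If $n$ is itself a singleton of $\tau$, so that $\tau=\tau'|n$, I expand $\tau'\eqz\sum_{\rho}\p_{n-1}(\tau',\rho)\,\rho$ over planar partitions $\rho$ of $\{1,\dots,n-1\}$. Lemma~\ref{newpart} then promotes this to $\tau\eqz\sum_{\rho}\p_{n-1}(\tau',\rho)\,(\rho|n)$; each $\rho|n$ is planar, so this is already the planar decomposition of~$\tau$. Reading off the coefficient of $\sigma_{0}|n$ yields $\p_{n}(\tau,\sigma_{0}|n)=\p_{n-1}(\tau',\sigma_{0})$, the required matching of entries; iterating over each singleton of $\sigma$ produces the ``matching columns'' half of the lemma.

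The harder case is when $n$ lies in a non-trivial part of $\tau$, and the main obstacle is that iterated planarization via Lemma~\ref{cross} can create planar terms in which $n$ appears as a singleton (for example, $\p(13|24)$ contains the term $4|123$). To get around this, I would prove by induction on $n$ the following strengthened claim: whenever $\tau\in W_n$ has $n$ in a part of size $\geq 2$, every planar partition $\pi$ appearing with nonzero coefficient in $\p_{n}(\tau)$ and having $n$ as a singleton must contain at least one part of size $\geq 3$. Granted this, the desired vanishing follows immediately: since $\sigma_{0}|n$ consists only of singletons and doubletons, it cannot appear in such an expansion.

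To prove the strengthened claim I would peel $n$ off $\tau$ using Lemma~\ref{split} (which keeps $n$ in a non-trivial part of every intermediate term) and then reduce the resulting non-planar terms via Lemma~\ref{cross}. The key observation is that the only way Lemma~\ref{cross} can produce an $n$-singleton term is via an application to a quadruple containing~$n$: the relevant term then has the shape $n|abc$, in which the other three members of the quadruple coalesce into a triple, and subsequent manipulations can only enlarge that triple by absorbing additional elements of $\tau$. Making this bookkeeping rigorous under iterated applications of Lemmas~\ref{split} and~\ref{cross}---in particular accounting for the cancellations among $n$-singleton terms that do occur (for instance the $5|1234$ contributions that cancel out inside $\p(13|245)$)---is where the bulk of the technical work lies and where the main obstacle of the proof resides.
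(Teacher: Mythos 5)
The paper you are working from does not actually prove this lemma---it is imported verbatim from the companion paper \cite[Lemma~4.1]{KW:polynomial}---so there is no internal proof to compare against; judging your argument on its own terms, the structure is reasonable (peel one singleton at a time, rotate it to position $n$, handle columns $\tau=\tau'|n$ via \lref{newpart} plus uniqueness of the planar expansion, and reduce the remaining columns to the claim that any planar term of $\p(\tau)$ with $n$ a singleton must contain a part of size $\geq 3$ whenever $n$ sits in a part of $\tau$ of size $\geq 2$). The easy half is fine. But the hard half is exactly that claim, and you do not prove it: you offer a sketch and concede that ``the bulk of the technical work'' remains. Worse, the key observation on which the sketch rests is false as stated. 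It is not true that once an $n$-singleton term appears, the accompanying tripleton ``can only be enlarged by subsequent manipulations'': the generalized uncrossing rule redistributes elements between the two crossing parts, and a large part can shrink. For instance, uncrossing the doubleton $\{1,3\}$ against the tripleton $\{2,4,5\}$ gives $13|245 \eqz 1|2345+2|1345+3|1245+45|123-12|345-145|23$, whose term $45|123$ has the original tripleton reduced to the doubleton $\{4,5\}$. So the monotonicity your bookkeeping relies on does not hold, and the cancellation issues you flag (which your sketch also leaves unresolved) are a symptom of not having the right invariant.

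The gap is repairable, and the repair shows the cancellations are a red herring. Every uncrossing step derived from Lemmas~\ref{cross}, \ref{newpart}, and~\ref{split} applied to crossing parts $P,Q$ leaves all other parts untouched and replaces $P|Q$ by signed terms in which $P\cup Q$ is repartitioned into two nonempty parts (the example above, and the base rule itself, have this form). Consequently the property ``either $n$ lies in a part of size $\geq 2$, or some part has size $\geq 3$'' is preserved termwise: if the goodness came from a part other than $P,Q$ it is untouched; if $n\in P\cup Q$, then any term in which $n$ splits off as a singleton leaves behind a part of size $|P\cup Q|-1\geq 3$; and if $n\notin P\cup Q$ but one of $P,Q$ has size $\geq 3$, then $|P\cup Q|\geq 5$ and every two-part repartition has a part of size $\geq 3$. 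Since $\tau$ starts good, every planar term in the final expansion is good, with no need to track signs or cancellations at all; your strengthened claim then follows, and with it the vanishing half of the lemma. As written, however, your proposal asserts a false intermediate statement and leaves the central inductive bookkeeping undone, so it does not yet constitute a proof.
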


The above lemmas can be used to recursively rewrite a
non-planar partition $\tau$ as an equivalent linear combination
of planar partitions. As a simple example, to reduce $13|245$,
start with the equation from \lref{cross} and, using
\lref{split}, adjoin a $5$
to every part containing $4$, yielding
$$13|245 \equiv 1|2345+2|1345+3|1245+45|123-12|345-145|23.
$$

\subsection{Connection probabilities}

For a partition $\tau$ on $1,\dots,n$ we define
\begin{equation}\label{Ltau}
\KL_\tau = \sum_F\prod_{\text{$\{i,j\} \in F$}} L_{i,j},
\end{equation}
where the sum is over those spanning forests $F$ of the complete graph on
$n$ vertices $1,\dots,n$ for which trees of $F$ span the parts of $\tau$.

This definition makes sense whether or not the partition $\tau$ is planar.
For example, $\KL_{1|234} = L_{2,3} L_{3,4} + L_{2,3}L_{2,4} + L_{2,4} L_{3,4}$
and $\KL_{13|24} = L_{1,3} L_{2,4}$.

Recall that $\pu{\sigma}=\Pr(\sigma)/\Pr(\unc).$

\begin{theorem}[Theorem 1.2 of \cite{KW:polynomial}]\label{KW1mainthm}
 $$\pu{\sigma} = \sum_\tau \p_{\sigma,\tau} \KL_\tau.$$
 \end{theorem}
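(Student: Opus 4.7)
The plan is to reformulate the theorem and then prove the reformulation via matrix-tree-type identities.

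By the non-degeneracy of $\langle\,\cdot\,,\,\cdot\,\rangle_\g$ on $U_n$ and the characterization of $\p$ as the projection $W_n\to U_n$ along $K_n$, the claim $\pu{\sigma}=\sum_\tau \p_{\sigma,\tau}\,\KL_\tau$ for all planar $\sigma$ is equivalent to $\sum_\tau \KL_\tau\,\tau-\sum_{\sigma\text{ planar}}\pu{\sigma}\,\sigma\in K_n$; i.e., to the identity
\[
\sum_\tau \KL_\tau\,\langle\tau,\sigma'\rangle_\g \;=\; \sum_{\sigma\text{ planar}}\pu{\sigma}\,\langle\sigma,\sigma'\rangle_\g \qquad\text{for every planar }\sigma'.
\]
I would prove this by showing both sides compute the same weighted tree/grove quantity on the graph $\G^{\sigma'}$ obtained from $\G$ by identifying the nodes within each part of $\sigma'$.

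Unfolding the LHS, $\sum_\tau \KL_\tau\,\langle\tau,\sigma'\rangle_\g=\sum_F\prod_{\{i,j\}\in F}L_{i,j}$, where $F$ runs over spanning forests of the complete graph $K_n$ whose partition is compatible with $\sigma'$ in the sense $\langle\tau(F),\sigma'\rangle_\g=1$; the compatibility condition forces $F$ to descend to a spanning tree of the quotient $K_n/\sigma'$, so by Kirchhoff's matrix-tree theorem the LHS equals a principal minor of the merged response matrix $L^{\sigma'}$ with entries $L^{\sigma'}_{r,s}=\sum_{i\in A_r,\,j\in A_s}L_{i,j}$. For the RHS, since $L$ is the (negated) Schur complement of the Laplacian of $\G$ over the internal vertices, that same minor of $L^{\sigma'}$ is, by the all-minors matrix-tree theorem applied to $\G$, the weighted spanning-tree count of $\G^{\sigma'}$ divided by the all-singletons partition function---which, after regrouping by the node-partition of the lifted grove, is exactly $\sum_\sigma \pu{\sigma}\,\langle\sigma,\sigma'\rangle_\g$.

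The main technical step is the last identification, matching the matrix-tree count on $\G^{\sigma'}$ with the sum of $\pu{\sigma}\,\langle\sigma,\sigma'\rangle_\g$ over planar $\sigma$; this hinges on the fact that $\G^{\sigma'}$ is itself planar (since the parts of $\sigma'$ are arcs on the outer face), so any spanning tree of $\G^{\sigma'}$ lifts to a grove of $\G$ whose node-partition is automatically planar, and no non-planar partitions appear on the RHS even though non-planar $\tau$ do appear on the LHS. An alternative, more combinatorial, route is induction on $n$: verify the theorem for small $n$ directly, use \lref{newpart} and \lref{split} to reduce planar $\sigma$ on $n$ nodes to data on $n-1$ nodes, and use \lref{cross} together with the earlier instances to handle the non-planar $\tau$'s contributing to the right-hand side.
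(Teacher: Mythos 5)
This statement is not proved in the present paper at all: it is imported verbatim as Theorem~1.2 of the companion paper \cite{KW:polynomial}, so there is no internal proof to compare against. Your outline is essentially the argument of that companion paper, and it is sound: since $K_n$ is the radical of $\langle\cdot,\cdot\rangle_\g$ and the form is nondegenerate on $U_n$ (the meander-matrix determinant), it suffices to pair both sides against an arbitrary planar $\sigma'$; the condition $\langle\tau,\sigma'\rangle_\g=1$ is exactly the condition that a forest of the complete graph (resp.\ a grove of $\G$) descend to a spanning tree after the nodes are identified along the parts of $\sigma'$; and both $\sum_\tau \KL_\tau\langle\tau,\sigma'\rangle_\g$ and $\sum_\sigma \pu{\sigma}\langle\sigma,\sigma'\rangle_\g$ are then identified with $Z_{\tree}(\G^{\sigma'})/Z_{1|2|\cdots|n}$, the former via the matrix-tree expansion of a codimension-one minor of the merged response matrix (the response matrix of $\G^{\sigma'}$ is obtained from $L$ by summing rows and columns within parts, and codimension-one minors of a response matrix are trees over uncrossing --- the Schur-complement/all-minors lemma that this paper quotes in \sref{minors}), the latter by the edge/vertex count showing that a grove of type $\sigma$ glues to a spanning tree precisely when the two conditions defining the bilinear form hold. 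Two small touch-ups: the reason only planar $\sigma$ appear on the right is not the planarity of $\G^{\sigma'}$ (nor are the parts of a non-crossing $\sigma'$ generally arcs); it is simply that $Z_\sigma(\G)=0$ for crossing $\sigma$ because $\G$ is planar with the nodes on the outer face. Also, in the matrix-tree step one should note that $L$ has zero row sums (so the merged matrix is a formal negated Laplacian) and check the sign convention, but with this paper's sign for $L$ the codimension-one minor does come out as the tree sum with the stated weights. Your alternative inductive route via Lemmas~\ref{cross}--\ref{split} would establish only properties of the projection $\p$, not the probabilistic identity, so the gluing/nondegeneracy argument is the one to keep.
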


\section{Tripartite pairing partitions}
\label{dualtripodL}

Recall that a tripartite partition is defined by three circularly
contiguous sets of nodes $R$, $G$, and $B$, which represent the red
nodes, green nodes, and blue nodes (a node may be split between two
color classes), and the number of nodes of the different colors
satisfy the triangle inequality.  In this section we deal with
tripartite partitions in which all the parts are either doubletons or
singletons.  (We deal with tripod partitions in the next section.)  By
\lref{drop-singletons} above, in fact additional singleton nodes could
be inserted into the partition at arbitrary locations, and the
$L$-polynomial for the partition would remain unchanged.  Thus we lose
no generality in assuming that there are no singleton parts in the
partition, so that it is a \textbf{tripartite pairing} partition.  This
assumption is equivalent to assuming that each node has only one color.

\begin{theorem}\label{tripartite}
  Let $\sigma$ be the tripartite pairing partition defined by circularly
  contiguous sets of nodes $R$, $G$, and $B$, where $|R|$, $|G|$, and $|B|$
  satisfy the triangle inequality.  Then
  $$\pu{\sigma} = \Pf \begin{bmatrix}
     0 & \fm L_{R,G} &\fm L_{R,B}\\
   -L_{G,R} &  0 &\fm L_{G,B}\\
   -L_{B,R} &-L_{B,G} &  0
\end{bmatrix}.$$
\end{theorem}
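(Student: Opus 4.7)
My plan is to reduce the theorem to a combinatorial identity about the projection matrix $\p$ and then prove that identity by induction. By \tref{KW1mainthm}, $\pu{\sigma}=\sum_\tau \p_{\sigma,\tau}\KL_\tau$; since $\p$ preserves the number of parts and, by iteratively applying \lref{newpart}, any $\tau$ with a singleton projects to planar partitions all containing that same singleton (and $\sigma$ has none), only pairings $\tau$ contribute, giving $\pu{\sigma}=\sum_{\pi\text{ pairing}}\p_{\sigma,\pi}\KL_\pi$. Direct expansion of the Pfaffian yields
$$\Pf\begin{bmatrix} 0 & L_{R,G} & L_{R,B}\\ -L_{G,R} & 0 & L_{G,B}\\ -L_{B,R} & -L_{B,G} & 0 \end{bmatrix} = \sum_\pi \epsilon_\pi \KL_\pi,$$
where $\pi$ ranges over perfect matchings of $R\cup G\cup B$ that pair only different-color nodes (a same-color pair would contribute a zero from a diagonal block of the matrix) with Pfaffian sign $\epsilon_\pi=(-1)^{c(\pi)}$ counting chord crossings in the circular node order. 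Matching monomials in the $L_{i,j}$ reduces the theorem to the combinatorial claim $\p_{\sigma,\pi}=\epsilon_\pi$ for color-respecting pairings $\pi$ and $\p_{\sigma,\pi}=0$ whenever $\pi$ contains a same-color pair.

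I would prove this by induction on $n=|R|+|G|+|B|$, with base case $n=2$ immediate. For the inductive step, I would use cofactor expansion of the Pfaffian along the row of the first node (say $1\in R$),
$$\Pf(A)=\sum_{j\in G\cup B}(\pm)_j\, L_{1,j}\,\Pf(A^{\hat 1,\hat j}),$$
in which each submatrix $A^{\hat 1,\hat j}$ again has tripartite block form on the $n-2$ remaining nodes and, by the induction hypothesis, equals $\pu{\sigma'_j}$ for the reduced tripartite pairing $\sigma'_j$. Mirroring this on the partition side, I would use \lref{split} and \lref{newpart} (together with \lref{cross} to handle any nonplanar intermediate partitions) to rewrite $\sigma$ as an equivalent signed sum in which node $1$ is explicitly paired with each candidate $j\in G\cup B$, producing the recursion $\pu{\sigma}=\sum_j(\pm)_j\, L_{1,j}\pu{\sigma'_j}$.

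The main obstacle will be sign reconciliation. The Pfaffian cofactor signs $(\pm)_j$ depend on the linear positions of the nodes being removed, whereas the signs that arise from iteratively applying \lref{cross} to uncross partitions depend on the crossing structure of intermediate pairings and on the order in which uncrossings are performed. Verifying that the two sign conventions coincide---likely by a case analysis split on whether $j\in G$ or $j\in B$ and on the position of $j$ within its color block, potentially invoking a bookkeeping argument reminiscent of the Pfaffian analogue of Lindström--Gessel--Viennot---is the technical crux of the argument.
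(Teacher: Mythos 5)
Your first reduction is sound and matches the paper: by \tref{KW1mainthm} and the singleton/part-count observations, only pairings $\tau$ contribute, and since distinct pairings give distinct monomials $\KL_\tau$, the theorem is exactly the claim that $\p_{\sigma,\tau}$ vanishes on pairings containing a same-color pair and equals the Pfaffian (crossing) sign $\epsilon_\tau$ on color-respecting pairings. The gap is that you do not actually prove this claim, and the route you sketch for it does not work as stated. The ``mirroring'' step is the problem: Lemmas~\ref{cross}, \ref{newpart}, and \ref{split} compute \emph{columns} of $\p$ (they express an arbitrary $\tau$ as an equivalent combination of planar partitions), whereas your proposed recursion $\pu{\sigma}=\sum_j(\pm)_j L_{1,j}\,\pu{\sigma'_j}$ is a statement about the \emph{row} of $\p$ indexed by $\sigma$. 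Rewriting the (already planar) $\sigma$ as an $\eqz$-equivalent signed sum does not yield any identity for $\pu{\sigma}$, since $\pu{\cdot}$ is determined by rows of $\p$, not by equivalence classes; so the grove-side analogue of the Pfaffian cofactor expansion is unjustified. Moreover, even granting such a recursion, your induction never addresses the vanishing $\p_{\sigma,\tau}=0$ for pairings $\tau$ with a monochromatic pair; this is where the paper crucially uses that the color classes are circularly contiguous (a monochromatic doubleton persists under every application of the uncrossing rule because three of the four endpoints involved share a color), and without contiguity the theorem is simply false, so no proof can avoid this point. There is also a smaller issue: after deleting node $1$ and its partner $j$, the reduced color classes need not satisfy the triangle inequality, so ``the reduced tripartite pairing $\sigma'_j$'' to which you appeal in the induction hypothesis need not exist (the corresponding reduced Pfaffian is then identically zero, but your argument does not say what the grove-side coefficient is in that case).

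Finally, you explicitly defer the sign reconciliation, calling it the technical crux---but that crux is the actual content of the theorem. The paper resolves it not by cofactor expansion but by exploiting the color structure a second time: once monochromatic-containing columns are discarded, the uncrossing rule $13|24\to-12|34-14|23$ always produces exactly one surviving term, collapsing to the swap rule $\operatorname{red}_1x|\operatorname{red}_2y\to-\operatorname{red}_1y|\operatorname{red}_2x$, which shows $\p_{\sigma,\tau}=\pm1$ with a sign that flips under each same-color transposition; an induction on the number of transpositions from $\tau$ to $\sigma$, plus a check that the $\sigma$ term itself carries sign $+1$ in the Pfaffian, then matches the two sign conventions. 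Your proposal would need either to import this argument or to supply the deletion-compatibility of $\p_{\sigma,\tau}$ (with signs) that your cofactor induction presupposes; as written, neither is established.
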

Here $L_{R,G}$ is the submatrix of $L$ whose columns are the red nodes
and rows are the green nodes.  Similarly for $L_{R,B}$ and $L_{G,B}$.
Also recall that the Pfaffian $\Pf M$ of an antisymmetric $2n\times 2n$
matrix~$M$
is a square root of the determinant of $M$, and is a polynomial in the
matrix entries:
\begin{equation} \label{Pf}
 \Pf M = \sum_{\substack{\text{permutations $\pi$}\\ \pi_1<\pi_2,\dots,\pi_{2n-1}<\pi_{2n} \\ \pi_1<\pi_3<\cdots<\pi_{2n-1}}} (-1)^\pi M_{\pi_1,\pi_2} M_{\pi_3,\pi_4} \cdots M_{\pi_{2n-1},\pi_{2n}}
= \pm \sqrt{\det M},
\end{equation}
where the sum can be interpreted as a sum over pairings of $\{1,\ldots,2n\}$,
since any of the $2^n n!$ permutations associated with a pairing
$\{\{\pi_1,\pi_2\},\ldots,\{\pi_{2n-1},\pi_{2n}\}\}$ would give the
same summand.

In \aref{dualtripodR} there is a corresponding formula for
tripartite pairings in terms of the matrix~$R$ of pairwise resistances
between the nodes.

Observe that we may renumber the nodes while preserving their cyclic
order, and the above Pfaffian remains unchanged: if we move the last
row and column to the front, the sign of the Pfaffian changes, and
then if we negate the (new) first row and column so that the entries
above the diagonal are non-negative, the Pfaffian changes sign again.

As an illustration of the theorem, we have
\begin{align}
\raisebox{-24pt}{\includegraphics{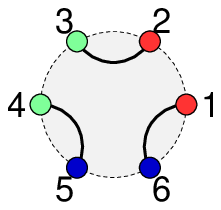}} \ \ \
\pu{16|23|45}
=&\label{pfaffexample}
\Pf\left[\begin{matrix}
 0         &  0         &\fm L_{1,3} &\fm L_{1,4} &\fm L_{1,5} &\fm L_{1,6} \\
 0         &  0         &\fm L_{2,3} &\fm L_{2,4} &\fm L_{2,5} &\fm L_{2,6} \\
-L_{1,3} & -L_{2,3} &  0         &  0         &\fm L_{3,5} &\fm L_{3,6} \\
-L_{1,4} & -L_{2,4} &  0         &  0         &\fm L_{4,5} &\fm L_{4,6} \\
-L_{1,5} & -L_{2,5} & -L_{3,5} & -L_{4,5} &  0         &  0         \\
-L_{1,6} & -L_{2,6} & -L_{3,6} & -L_{4,6} &  0         &  0
\end{matrix}\right] \\
 =& L_{1,3}L_{2,5}L_{4,6} - L_{1,3}L_{2,6}L_{4,5}
 - L_{1,4}L_{2,5}L_{3,6} + L_{1,4}L_{2,6}L_{3,5} \notag \\
 &- L_{1,5}L_{2,3}L_{4,6} + L_{1,5}L_{2,4}L_{3,6}
 + L_{1,6}L_{2,3}L_{4,5} - L_{1,6}L_{2,4}L_{3,5}. \notag
\end{align}

Note that when one of the colors (say blue) is absent, the Pfaffian
becomes a determinant (in which the order of the green vertices is
reversed).  This bipartite determinant special case was proved by
Curtis, Ingerman, and Morrow \cite[Lemma~4.1]{\CIM} and Fomin
\cite[Eqn.~4.4]{\Fomin}.  See \sref{minors} for a (different)
generalization of this determinant special case.

\begin{proof}[Proof of \tref{tripartite}]
  From \tref{KW1mainthm}
  we are interested in computing the non-planar partitions
 $\tau$  (columns of $\p$) for which $\p_{\sigma,\tau}\neq 0$.

  When we project $\tau$, if $\tau$ has singleton parts, its image must consist
  of planar partitions having those same singleton parts, by the lemmas
  above: all the transformations preserve the singleton parts.
  Since $\sigma$ consists of only doubleton parts, because of the
 on the number of parts, $\p_{\sigma,\tau}$ is non-zero only
  when $\tau$ contains only doubleton
  parts.  Thus in \lref{cross} we may use the abbreviated transformation rule
\begin{equation} \label{pairing-rule}
   13|24 \to -14|23 -12|34.
\end{equation}
  Notice that if we take any crossing pair of indices, and apply this rule to it,
  each of the two resulting partitions has fewer crossing pairs than
  the original partition, so repeated application of this rule is
  sufficient to express $\tau$ as a linear combination of planar
  partitions.

  If a non-planar partition~$\tau$ contains a monochromatic part,
  and we apply Rule~\eqref{pairing-rule} to
  it, then because the colors are contiguous, three of the above
  vertices are of the same color, so both of the resulting partitions
  contain a monochromatic part.  When doing the transformations, once
  there is a monochromatic doubleton, there always will be one, and
  since $\sigma$ contains no such monochromatic doubletons, we may
  restrict attention to columns~$\tau$ with no monochromatic
  doubletons.

  When applying Rule~\eqref{pairing-rule} since there are only three
  colors, some color must appear twice.  In one of the resulting
  partitions there must be a monochromatic doubleton, and we may
  disregard this partition since it will contribute $0$.  This allows
  us to further abbreviate the uncrossing transformation rule:
    $$\operatorname{red}_1 x | \operatorname{red}_2 y \to - \operatorname{red}_1 y | \operatorname{red}_2 x,$$
  and similarly for green and blue.  Thus for any partition $\tau$
  with only doubleton parts, none of which are monochromatic, we have
  $\p_{\sigma,\tau}=\pm1$, and otherwise $\p_{\sigma,\tau}=0$.

  If we consider the Pfaffian of the matrix
  $$\begin{bmatrix}
     0 & \fm L_{R,G} & \fm L_{R,B}\\
   -L_{G,R} &  0 & \fm L_{G,B}\\
   -L_{B,R} &-L_{B,G} &  0
  \end{bmatrix},$$
  each monomial corresponds to a monomial in the $L$-polynomial of $\sigma$, up
  to a possible sign change that may depend on the term.

  Suppose that the nodes are numbered from $1$ to $2n$ starting with the
  red ones, continuing with the green ones, and finishing with the
  blue ones.  Let us draw the linear chord diagram corresponding to
  $\sigma$.  Pick any chord, and move one of its endpoints to be
  adjacent to its partner, while maintaining their relative order.
  Because the chord diagram is non-crossing, when doing the move, an
  integer number of chords are traversed, so an even number of
  transpositions are performed.  We can continue this process until
  the items in each part of the partition are adjacent and in sorted order, and the
  resulting permutation will have even sign.  Thus in the above
  Pfaffian, the term corresponding to $\sigma$ has positive sign, i.e., the
  same sign as the
  $\sigma$ monomial in $\sigma$'s $L$-polynomial.

  Next we consider other pairings $\tau$, and show by induction on the
  number of transpositions required to transform $\tau$ into $\sigma$,
  that the sign of the $\tau$ monomial in $\sigma$'s $L$-polynomial
  equals the sign of the $\tau$ monomial in the Pfaffian.  Suppose
  that we do a swap on $\tau$ to obtain a pairing $\tau'$ closer to
  $\sigma$.  In $\sigma$'s $L$ polynomial, $\tau$ and $\tau'$ have
  opposite sign.  Next we compare their signs in the Pfaffian.  In the
  parts in which the swap was performed, there is at least one
  duplicated color (possibly two duplicated colors).  If we implement
  the swap by transposing the items of the same color, then the items
  in each part remain in sorted order, and the sign of the permutation
  has changed, so $\tau$ and $\tau'$ have opposite signs in the
  Pfaffian.

  Thus $\sigma$'s $L$-polynomial is the Pfaffian of the above matrix.
\end{proof}

\section{Tripod partitions}
\label{tripodL}

In this section we show how to compute $\pu{\sigma}$ for tripod
partitions~$\sigma$, i.e., tripartite partitions~$\sigma$ in which one
of the parts has size three.  The three lower-left panels of
\fref{rgb-tripartite} and the left panels of \fref{tripod} and
\fref{csgrove} show some examples.

\subsection{Via dual graph and inverse response matrix} \label{viadual}

For every tripod partition~$\sigma$, the dual partition~$\sigma^*$ is
also tripartite, and contains no part of size three.  As a
consequence, we can compute the probability $\pu{\sigma}$ when
$\sigma$ is a tripod in terms of a Pfaffian in the entries of the
response matrix~$L^*$ of the dual graph~$\G^*$:
$$ \pu{\sigma} = \frac{\Pr(\text{$\sigma$ in $\G$})}{\Pr(\text{$1|2|\cdots|n$ in $G$})} = \frac{\Pr(\text{$\sigma^*$ in $\G^*$})}{\Pr(\text{$1|2|\cdots|n$ in $\G^*$})} \frac{\Pr(\text{$12\cdots n$ in $\G$})}{\Pr(\text{$1|2|\cdots|n$ in $\G$})}.$$
The last ratio in the right is known to be an $(n-1)\times(n-1)$ minor
of $L$ (see e.g., \sref{minors}); it remains to express the
matrix~$L^*$ in terms of~$L$.

Let $i'$ be the node of the dual graph which is located between the
nodes $i$ and $i+1$ of $\G$.

\begin{lemma}
  The entries of $L^*$ are related to the entries of $L$ as follows:
  $$L^*_{i',j'} = (\delta_i-\delta_{i+1})L^{-1}(\delta_j-\delta_{j+1}).$$
\end{lemma}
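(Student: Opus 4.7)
The plan is to invoke the classical harmonic-conjugate duality on planar resistor networks. Given primal boundary potentials $v$ with harmonic extension $V$, the edge-current $1$-form $I_{ab} = c_{ab}(V_a - V_b)$ is closed on $\G^*$ (this is just the statement that $V$ is harmonic at each interior vertex), and hence integrates to a conjugate function $U$ on dual vertices, unique up to an additive constant. Computing the jump in $U$ across the primal boundary node $i$ by summing the currents leaving $i$ gives the boundary-to-boundary identity
\begin{equation}\label{eq:ptd}
u_{i'} - u_{(i-1)'} = (Lv)_i.
\end{equation}
Applying the same construction to $\G^*$ (with dual conductances $1/c_e$) starting from dual boundary potentials $u$, and orienting so that $(\G^*)^* = \G$, yields the companion identity
\begin{equation}\label{eq:dtp}
v_i - v_{i+1} = (L^* u)_{i'}.
\end{equation}
First I would verify \eqref{eq:ptd} and \eqref{eq:dtp} with the paper's sign conventions, including the footnoted fact that the paper's $L$ is the negative of Colin de Verdi\`ere's Dirichlet-to-Neumann operator; the two sign flips ($L \leftrightarrow -L$ and $L^* \leftrightarrow -L^*$) cancel, so the final formula is convention-independent.

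Once \eqref{eq:ptd} and \eqref{eq:dtp} are in hand, the rest is a short piece of linear algebra. Write $e_k = \delta_k - \delta_{k+1}$ and let $D$ be the $n\times n$ operator with $(Du)_i = u_{i'} - u_{(i-1)'}$; one checks directly that $D\delta_{j'} = e_j$. Relation \eqref{eq:ptd} reads $Lv = Du$. Since $L$ is symmetric with one-dimensional kernel spanned by $\mathbf{1}$, its image equals $\mathbf{1}^\perp$, and $Du \in \mathbf{1}^\perp$ by telescoping; so the equation determines $v$ uniquely modulo constants, and we may write $v = L^{-1} Du$ with $L^{-1}$ denoting the pseudoinverse on $\mathbf{1}^\perp$. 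Substituting into \eqref{eq:dtp} and reading off the coefficient of $u_{j'}$,
$$L^*_{i',j'} = e_i^T L^{-1} e_j = (\delta_i - \delta_{i+1})\, L^{-1}\, (\delta_j - \delta_{j+1}),$$
which is the claimed formula. The right-hand side is unambiguous despite $L$ being non-invertible, because both $e_i$ and $e_j$ lie in $\mathbf{1}^\perp$, so adding a multiple of $\mathbf{1}$ to $L^{-1} e_j$ does not change the scalar.

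The main obstacle I anticipate is not the algebra but the sign bookkeeping in establishing \eqref{eq:ptd} and \eqref{eq:dtp}: one has to fix a consistent counterclockwise orientation on both the primal and dual boundaries, verify that the $1$-form $I$ is indeed closed at each interior vertex so that $U$ is path-independent, and check that the primal-dual involution introduces the same sign twice so everything cancels. As a sanity check I would verify the $n=2$ case explicitly (a single edge of conductance $c$, whose dual is a single edge of conductance $1/c$), where both sides evaluate to the symmetric $2\times 2$ matrix with diagonal entries $c$ and off-diagonals $-c$. Once the orientation conventions are pinned down, the derivation collapses to the one-line identity above.
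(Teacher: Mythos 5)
Your route is genuinely different from the paper's and, in outline, correct.  The paper disposes of the lemma in two lines by citation: Proposition~2.9 of \cite{KW:polynomial} gives $L^*_{i',j'}=\tfrac12(R_{i,j}+R_{i+1,j+1}-R_{i,j+1}-R_{i+1,j})$, Proposition~A.2 gives $R_{i,j}=-(\delta_i-\delta_j)^TL^{-1}(\delta_i-\delta_j)$, and substituting the second into the first makes all diagonal terms of $L^{-1}$ cancel, leaving exactly $(\delta_i-\delta_{i+1})^TL^{-1}(\delta_j-\delta_{j+1})$.  You instead re-derive the underlying planar duality from scratch via the discrete harmonic conjugate; your identities (1) and (2) are precisely the content that the cited Proposition~2.9 encapsulates, and your linear algebra---including the observation that $\delta_i-\delta_{i+1}$ and $\delta_j-\delta_{j+1}$ lie in the orthocomplement of the constants, so the pseudoinverse expression is unambiguous---is correct and matches the paper's remark after the lemma.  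What the paper's argument buys is brevity and freedom from orientation issues (resistances are sign-convention-free); what yours buys is a self-contained proof not leaning on the companion paper's resistance formulas.  Two points to tighten: first, (1) and (2) must hold \emph{simultaneously for the same pair} $(v,u)$ when you substitute $u=\delta_{j'}$; the clean logical order is to start from an arbitrary $u$, harmonically extend on $\G^*$, take the conjugate $V$ on $\G$ to get (2), and then use the double-conjugation/involution property (which you gesture at with $(\G^*)^*=\G$) to get (1) for that same pair---equation (1) alone does not attach your chosen $u$ to a conjugate pair.  Second, a small slip in your sanity check: for a single edge of conductance $c$ the dual edge has conductance $1/c$, so both sides of the identity have entries $\pm 1/c$, not $\pm c$ (the identity itself still checks out, since $(\delta_1-\delta_2)^TL^{-1}(\delta_1-\delta_2)=1/c$ for the pseudoinverse of the one-edge response matrix).
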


Here even though $L$ is not invertible, the vector
$\delta_j-\delta_{j+1}$ is in the image of $L$ and $\delta_i-\delta_{i+1}$
is perpendicular to the kernel of $L$, so the above expression is well defined.

\begin{proof}
From \cite[Proposition~2.9]{KW:polynomial}, we have
$$L_{i',j'}^*=\frac12(R_{i,j}+R_{i+1,j+1}-R_{i,j+1}-R_{i+1,j}),$$
where $R_{i,j}$ is the resistance between nodes $i$ and $j$.
From \cite[Proposition~A.2]{KW:polynomial},
$$R_{i,j}=-(\delta_i-\delta_j)^TL^{-1}(\delta_i-\delta_j).$$
The result follows.
\end{proof}

\subsection{Via Pfaffianiod}

In \sref{viadual} we saw how to compute $\pu{\sigma}$ for tripartite
partitions~$\sigma$.  It is clear that the formula given there is
a rational function of the $L_{i,j}$'s, but from \tref{KW1mainthm},
we know that it is in fact a polynomial in the $L_{i,j}$'s.
Here we give the polynomial.

We saw in \sref{dualtripodL} that the Pfaffian was relevant to
tripartite pairing partitions, and that this was in part because the
Pfaffian is expressible as a sum over pairings.  For tripod partitions
(without singleton parts), the relevant matrix operator resembles a
Pfaffian, except that it is expressible as a sum over near-pairings,
where one of the parts has size~$3$, and the remaining parts have
size~$2$.  We call this operator the \textbf{Pfaffianoid}, and
abbreviate it~$\Pfd$.  Analogous to \eqref{Pf}, the
Pfaffianoid of an antisymmetric $(2n+1)\times(2n+1)$ matrix~$M$ is
defined by
\begin{equation} \label{Pfd}
 \Pfd M = \sum_{\substack{\text{permutations $\pi$}\\ \pi_1<\pi_2,\dots,\pi_{2n-3}<\pi_{2n-2} \\ \pi_{2n-1}<\pi_{2n+1} \\ \pi_1<\pi_3<\cdots<\pi_{2n-3}}} (-1)^\pi M_{\pi_1,\pi_2} M_{\pi_3,\pi_4} \cdots M_{\pi_{2n-3},\pi_{2n-2}} \times M_{\pi_{2n-1},\pi_{2n}} M_{\pi_{2n},\pi_{2n+1}},
\end{equation}
where the sum can (almost) be interpreted as a sum over
near-pairings (one tripleton and rest doubletons) of $\{1,\ldots,2n+1\}$,
since for any permutation associated with the near-pairing
$\{\{\pi_1,\pi_2\},\ldots,\{\pi_{2n-3},\pi_{2n-2}\},\{\pi_{2n-1},\pi_{2n},\pi_{2n+1}\}\}$,
the summand only depends on the order of the items in the tripleton part.

The sum-over-pairings formula for the Pfaffian is fine as a
definition, but there are more computationally efficient ways (such as
Gaussian elimination) to compute the Pfaffian.  Likewise, there are
more efficient ways to compute the Pfaffianoid than the above
sum-over-near-pairings formula.  For example, we can write
\begin{equation}
  \Pfd M = \sum_{1\leq a<b<c\leq 2n+1} (-1)^{a+b+c} (M_{a,b} M_{b,c} + M_{b,c} M_{a,c} + M_{a,c} M_{a,b}) \Pf [M\smallsetminus\{a,b,c\}],
\end{equation}
where $M\smallsetminus\{a,b,c\}$ denotes the matrix~$M$ with rows and
columns $a$, $b$, and $c$ deleted.  It is also possible to represent
the Pfaffianoid as a double-sum of Pfaffians.

The tripod probabilities can written as a Pfaffianoid in
the $L_{i,j}$'s as follows:
\begin{theorem}\label{thm:tripod}
  Let $\sigma$ be the tripod partition without singletons defined by circularly
  contiguous sets of nodes $R$, $G$, and $B$, where $|R|$, $|G|$, and $|B|$
  satisfy the triangle inequality.  Then
  $$\pu{\sigma} = 
(-1)^\text{\rm sum of items in $\sigma$'s tripleton part}
\Pfd \begin{bmatrix}
     0 & \fm L_{R,G} &\fm L_{R,B}\\
   -L_{G,R} &  0 &\fm L_{G,B}\\
   -L_{B,R} &-L_{B,G} &  0
\end{bmatrix}.$$
\end{theorem}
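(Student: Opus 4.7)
The plan is to imitate the proof of \tref{tripartite}. By \tref{KW1mainthm}, $\pu{\sigma}=\sum_\tau \p_{\sigma,\tau}\KL_\tau$, so I must identify which $\tau$ contribute with nonzero $\p_{\sigma,\tau}$ and match the resulting signs with the Pfaffianoid. Structural properties of $\p$ restrict the sum: projection preserves the number of parts, and by \lref{newpart}, \lref{split}, and \lref{drop-singletons} it preserves singleton parts. Hence any contributing $\tau$ has the same number of parts as $\sigma$ (namely $n$ parts on $2n+1$ items) and contains no singletons, forcing the part structure ``one tripleton + $n-1$ doubletons.''

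Next I would argue that only $\tau$'s whose tripleton is trichromatic and whose doubletons are all dichromatic contribute. The monochromatic-doubleton case is handled exactly as in \tref{tripartite}: applying the abbreviated uncrossing rule $13|24 \to -14|23-12|34$ to a crossing involving a monochromatic doubleton produces, by color contiguity of $R$, $G$, $B$, two partitions each still containing a monochromatic doubleton, so the contribution vanishes because $\sigma$ has none. For the tripleton, I would derive an analogous uncrossing rule for doubleton--tripleton crossings by combining \lref{cross} with \lref{split} (the latter used to carry the third item of the tripleton along with the reduction of a four-item crossing on the remaining items), and verify that the rule preserves the property ``the tripleton contains two items of the same color.'' Color contiguity again makes the preservation automatic, ruling out non-trichromatic tripletons.

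For the surviving $\tau$, the forest polynomial factors as $\KL_\tau=\KL_{\{a,b,c\}}\prod_{\{i,j\}} L_{i,j}$, where the tripleton term $\KL_{\{a,b,c\}}=L_{a,b}L_{b,c}+L_{a,b}L_{a,c}+L_{a,c}L_{b,c}$ precisely matches the three-term tripleton summand in the Pfaffianoid definition (with $M_{i,j}=\pm L_{i,j}$ chosen according to the colors of $i$ and $j$, exactly as in the proof of \tref{tripartite}). The sign comparison between $\sum_\tau \p_{\sigma,\tau}\KL_\tau$ and $\Pfd$ then proceeds by induction on the number of transpositions needed to transform $\tau$ into $\sigma$, mirroring the doubleton sign argument in \tref{tripartite}: each swap flips the sign consistently in both $\KL_\tau$ and the Pfaffianoid. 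The overall prefactor $(-1)^{\text{sum of items in }\sigma\text{'s tripleton}}$ captures the $(-1)^{a+b+c}$ weight attached to the tripleton $\{a,b,c\}$ in the Pfaffianoid expansion, evaluated on $\sigma$'s canonical near-pairing.

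The main obstacle is the careful derivation and sign control of the tripleton uncrossing rule, in particular checking that a swap which moves an item between the tripleton and an adjacent doubleton produces consistent sign changes in both $\KL_\tau$ and the Pfaffianoid summand. An attractive alternative route is through \sref{viadual}: apply \tref{tripartite} to the dual pairing $\sigma^*$ on $\G^*$, then use $L^*_{i',j'}=(\delta_i-\delta_{i+1})L^{-1}(\delta_j-\delta_{j+1})$ to rewrite the resulting Pfaffian in $L^*$ as an expression in $L^{-1}$; reducing this to the Pfaffianoid in $L$ becomes a Jacobi-type identity relating a Pfaffian of minors of an inverse matrix to a Pfaffianoid of the matrix itself, which is purely algebraic once set up but requires careful bookkeeping.
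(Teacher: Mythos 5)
Your proposal follows essentially the same route as the paper's own proof in \aref{sec:pfd}: reduce via \tref{KW1mainthm} to columns consisting of one tripleton plus doubletons, derive the tripleton uncrossing rule $13|245 \to 45|123 - 12|345 - 23|154$ from \lref{cross} and \lref{split}, eliminate non-trichromatic parts by color contiguity, and fix signs against the Pfaffianoid by a transposition induction in which the $(-1)^{a+b+c}$ tripleton weight accounts for the global prefactor. The only ingredient of the paper's argument you did not flag is the termination check---that each application of the two rules strictly decreases the total number of crossing pairs, which the paper verifies by a case analysis of how a doubleton or the tripleton interacts with the rewritten parts---but this is bookkeeping within your plan rather than a missing idea.
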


The proof of \tref{thm:tripod} is similar in nature to the proof of
\tref{tripartite}, but there are more cases, so we give the proof
in \aref{sec:pfd}.

Unlike the situation for tripartite partitions, here we cannot appeal
to \lref{drop-singletons} to eliminate singleton parts from a tripod
partition, since \lref{drop-singletons} does not apply when there is a
part with more than two nodes.  However, any nodes in singleton parts
of the partition can be split into two monochromatic nodes of
different color, one of which is a leaf.  The response matrix of the
enlarged graph is essentially the same as the response matrix of the
original graph, with some extra rows and columns for the leaves which
are mostly zeroes.  \tref{thm:tripod} may then be applied to this
enlarged graph to compute $\pu{\sigma}$ for the original graph.

\section{Irreducibility}

\begin{theorem} \label{irreducible}
  For any non-crossing partition $\sigma$, $\pu{\sigma}$ is an
  irreducible polynomial in the $L_{i,j}$'s.
\end{theorem}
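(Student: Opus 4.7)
I would proceed by induction on $n$, exploiting the multilinearity of $\pu{\sigma}$ in the $L_{i,j}$'s (each $\KL_\tau$ is a sum of squarefree monomials by \eqref{Ltau}, and $\pu{\sigma}$ inherits this property via \tref{KW1mainthm}).

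\textbf{Singleton reduction.} If $\sigma$ has a singleton part $\{k\}$, then \lref{newpart} (combined with the observation that the transformation rules in \lref{cross} and \lref{split} preserve singletons) implies that every $\tau$ with $\p_{\sigma,\tau}\ne 0$ also has $\{k\}$ as a singleton. Consequently no $L_{k,\ell}$ appears in $\pu{\sigma}$, and as a polynomial $\pu{\sigma}$ coincides with $\pu{\sigma'}$ on the restricted node set $\{1,\dots,n\}\setminus\{k\}$; irreducibility follows by the induction hypothesis.

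\textbf{Multilinearity and reduction to coprimality.} Assume $\sigma$ has no singletons. Pick a variable $L_{i,j}$ actually occurring in $\pu{\sigma}$ and write $\pu{\sigma}=A+B\,L_{i,j}$ with $A,B$ independent of $L_{i,j}$. Multilinearity forces any factorization $\pu{\sigma}=PQ$ to have at least one factor, say $Q$, free of $L_{i,j}$; then $Q$ divides both $A$ and $B$. So irreducibility is reduced to showing $\gcd(A,B)=1$ for a judicious choice of $(i,j)$.

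\textbf{Choice of edge and coprimality.} The natural choice is $(i,j)$ joining two nodes in the same part of $\sigma$. Interpreting $B$ through the spanning-forest expansion, it equals (up to sign) the signed sum of forests containing $(i,j)$, with that factor stripped, and under graph contraction can be identified with a version of $\pu{\sigma''}$ where $\sigma''$ is the non-crossing partition on $n-1$ nodes obtained by merging $i$ and $j$. Induction gives $\pu{\sigma''}$ irreducible, and the variable expansion $L_{[i,j],\ell}\mapsto L_{i,\ell}+L_{j,\ell}$ that translates back to the original $L$-variables preserves irreducibility (it is a linear change of coordinates in an ambient polynomial ring). To conclude $\gcd(A,B)=1$ it remains to rule out $B\mid A$; this is done by exhibiting a forest monomial in $A$ avoiding the distinguished edge $(i,j)$ that cannot be realized as $B$ times a polynomial in the remaining variables.

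\textbf{Main obstacle.} The technical crux is identifying $B$ with a contracted version of $\pu{\sigma''}$ and then verifying $B\nmid A$; both steps require careful combinatorial bookkeeping, and the variable expansion needs a separate (elementary but noncosmetic) check that irreducibility is preserved. A cleaner route that bypasses contraction is purely combinatorial: produce two monomials $m_1,m_2$ of $\pu{\sigma}$, each appearing with signed coefficient $\pm 1$, whose symmetric difference in variable supports is the single variable $L_{i,j}$. Then any polynomial common divisor of $A$ and $B$ would have to divide both $m_2$ and $m_1/L_{i,j}$, hence be a unit. Constructing such a pair uses the basis exchange property for spanning trees of a single part of $\sigma$ together with the observation that the distinguished "path forest" monomial of $\pu{\sigma}$ has coefficient $+1$ (only the planar $\tau=\sigma$ contributes to it, and $\p_{\sigma,\sigma}=1$).
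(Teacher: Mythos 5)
Your opening moves are fine and overlap with the start of the paper's own argument: multilinearity of $\pu{\sigma}$ forces any factorization into variable-disjoint factors, and the singleton reduction is legitimate. But the core of your plan has two genuine gaps. First, the identification of $B$ (the coefficient of $L_{i,j}$) with a contracted polynomial $\pu{\sigma''}$ on the merged node set is not established and is not routine bookkeeping: it amounts to the claim that $\p_{\sigma,\tau}=\p_{\sigma'',\tau/ij}$ for every $\tau$ joining $i$ and $j$, i.e.\ a compatibility of the projection matrix with identifying two boundary nodes. When $i$ and $j$ are not cyclically adjacent this identification pinches the disk into two lobes, so the merged object is not a circular-planar node configuration in the sense used to define $\p$, and nothing in the paper (nor in your sketch) supplies the needed lemma. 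Even granting it, irreducibility of $B$ does not yield $\gcd(A,B)=1$: you still must exclude $B\mid A$, and that step is left entirely as a promissory note.

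Second, your ``cleaner route'' rests on an invalid divisibility inference. From two monomials $m_2$ and $m_1=L_{i,j}m_2$ of $\pu{\sigma}$ with coefficients $\pm1$ you conclude that a common divisor of $A$ and $B$ must divide the monomial $m_2$; but a divisor of a polynomial need not divide any individual monomial of it (e.g.\ $x+y$ divides $x^2-y^2$ but neither monomial), so this concludes nothing, and indeed a single such pair of monomials cannot certify irreducibility. What is actually needed, and what the paper's proof supplies, is a connectivity argument on the variables: since no monomial of $\pu{\sigma}$ is divisible by $L_{i,j}L_{i,k}L_{j,k}$ (forest monomials contain no triangles), any two variables sharing an index must lie in the same factor; and then, for every pair of mergeable parts $Q_1,Q_2$ of $\sigma$, an explicit computation showing $\p_{\sigma,\tau}=\pm1$ for a carefully chosen $\tau$ (with a separate case when one part is a singleton and the other has at least three elements) guarantees that some $L_{a,c}$ with $a\in Q_1$, $c\in Q_2$ occurs in $\pu{\sigma}$, so the variable graph is connected apart from isolated vertices and one factor must be constant. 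Your proposal contains no substitute for either of these two steps, so as it stands it does not prove the theorem.
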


By looking at the dual graph, it is a straightforward consequence
of \tref{irreducible} that $\Pr(\sigma)/\Pr(12\cdots n)$ is an
irreducible polynomial on the pairwise resistances.  In contrast,
for the double-dimer model, the polynomials $\hPr(\sigma)$ sometimes
factor (the first, second, and fourth examples in~\sref{ddsection} factor).

\begin{proof}[Proof of \tref{irreducible}]
Suppose that $\pu{\sigma}$ factors into $\pu{\sigma} = P_1 P_2$ where $P_1$
and $P_2$ are polynomials in the $L_{i,j}$'s.  Because $\pu{\sigma} =
\sum_\tau \p_{\sigma,\tau} \KL_\tau$ and each $\KL_\tau$ is
multilinear in the $L_{i,j}$'s, we see that no variable $L_{i,j}$
occurs in both polynomials $P_1$ and $P_2$.

Suppose that for distinct vertices $i,j,k$, the variables $L_{i,j}$
and $L_{i,k}$ both occur in $\pu{\sigma}$, but occur in different
factors, say $L_{i,j}$ occurs in $P_1$ while $L_{i,k}$ occurs in
$P_2$.  Then the product $\pu{\sigma}$ contains monomials divisible by
$L_{i,j} L_{i,k}$.  If we consider one such monomial, then the
connected components (with edges given by the indices of the variables
of the monomial) define a partition $\tau$ for which
$\p_{\sigma,\tau}\neq 0$ and for which $\tau$ contains a part
containing at least three distinct items $i$, $j$, and $k$.
Then $L_\tau$ contains $L_{j,k}$, so
$L_{j,k}$ also occurs in one of $P_1$ or $P_2$, say (w.l.o.g.) that it occurs in
$P_1$.  Because $L_\tau$ contains monomials divisible by
$L_{i,j}L_{j,k}$, so does $\pu{\sigma}$, and hence $P_1$ must contain
monomials divisible by $L_{i,j}L_{j,k}$.  But then $P_1 P_2$ would
contain monomials divisible by $L_{i,j} L_{i,k} L_{j,k}$, but
$\pu{\sigma}$ contains no such monomials, a contradiction, so in fact
$L_{i,j}$ and $L_{i,k}$ must occur in the same factor of $\pu{\sigma}$.

If we consider the graph which has an edge $\{i,j\}$ for each variable
$L_{i,j}$ of $\pu{\sigma}$, we aim to show that the graph is connected
except possibly for isolated vertices; it will then follow that
$\pu{\sigma}$ is irreducible.

We say that two parts $Q_1$ and $Q_2$ of a non-crossing partition $\sigma$
are mergeable if the partition $\sigma\setminus\{Q_1,Q_2\}\cup\{Q_1\cup Q_2\}$
is non-crossing. It suffices, to complete the proof, to show
that if $Q_1$ and $Q_2$ are mergeable parts of $\sigma$,
then $\pu{\sigma}$ contains $L_{a,c}$
for some $a\in Q_1$ and $c\in Q_2$.

Suppose $Q_1$ and $Q_2$ are mergeable parts of $\sigma$ that both have
at least two items.  When the items are listed in cyclic order, say
that $a$ is the last item of $Q_1$ before $Q_2$, $b$ is the first item
of $Q_2$ after $Q_1$, $c$ is the last item of $Q_2$ before $Q_1$, and
$d$ is the first item of $Q_1$ after $Q_2$.  Let $\tau$ be the
partition formed from $\sigma$ by swapping $c$ and $d$.  Let
$\sigma^*=\sigma\setminus\{Q_1,Q_2\}$, and let $A=Q_1\setminus\{d\}$
and $B=Q_2\setminus\{c\}$.  Then $\sigma = \sigma^* \cup
\{A\cup\{d\},B\cup\{c\}\}$ and $\tau = \sigma^* \cup \{A\cup\{c\},
B\cup\{d\}\}$.  Then
\begin{multline*}
\tau \to -\sigma
 -(\sigma^* \cup\{A\cup B,\{c,d\}\}) \\
 +(\sigma^* \cup\{A\cup B\cup\{d\},\{c\}\})
 +(\sigma^* \cup\{A\cup B\cup\{c\},\{d\}\}) \\
 +(\sigma^* \cup\{A, B\cup\{c,d\}\})
 +(\sigma^* \cup\{A\cup\{c,d\}, B\}).
\end{multline*}
Each of the partitions on the right-hand side is non-crossing, so
$\p_{\sigma,\tau}=-1$, so in particular $L_{a,c}$ occurs in
$\pu{\sigma}$.

Now suppose that $\sigma$ contains a singleton part $\{a\}$ and
another part $Q_2$ containing at least three items $b$, $c$, $d$,
where $b$, $c$, and $d$ are the first, second, and last items of the
part $Q_2$ as viewed from item $a$.  Let
$\sigma^*=\sigma\setminus\{\{a\},Q_2\}$ and $C=Q_2\setminus\{b,d\}$.
Let $\tau$ be the partition $$\tau = \sigma^* \cup
\{\{a\}\cup C,\{b,d\}\}.$$ Now
\begin{multline*}
\tau \to \sigma
 +(\sigma^* \cup \{\{b\},\{a,d\}\cup C\})
 +(\sigma^* \cup \{\{d\},\{a,b\}\cup C\})
 +(\sigma^* \cup \{C,\{a,b,d\}\})\\
 -(\sigma^* \cup \{\{b\}\cup C,\{a,d\}\})
 -(\sigma^* \cup \{\{a,b\},\{d\}\cup C\})
.
\end{multline*}
The second, third, fourth, fifth, and sixth terms on the RHS
contribute nothing to $\p_{\sigma,\tau}$ because their restrictions to
the intervals $[b,b]$, $[d,d]$, $(b,d)$, $[b,d)$, and $(b,d]$
respectively are planar and do not agree with $\sigma$.  Thus
$\p_{\sigma,\tau}=1$, and hence $L_{a,c}$ occurs in $\pu{\sigma}$.

Finally, if $\sigma$ contains singleton parts but no parts with at
least three items, then $\pu{\sigma}$ is formally identical to the
polynomial $\pu{\sigma^*}$ where $\sigma^*$ is the partition with the
singleton parts removed from $\sigma$, and we have already shown above
that the polynomial $\pu{\sigma^*}$ is irreducible.
\end{proof}

\section{Tripartite pairings in the double-dimer model}
\label{ddsection}

In this section we prove a determinant formula for the tripartite
pairing in the double-dimer model.
\begin{theorem} \label{dd-tripartite}
Suppose that the nodes are contiguously colored red, green, and
blue (a color may occur zero times),
and that $\sigma$ is the (unique) planar pairing in which like colors are
not paired together.  Let $\sigma_i$ denote the item that $\sigma$ pairs
with item~$i$.  We have
  $$\hPr(\sigma) = \det[1_{\text{\rm $i$, $j$ colored differently}} X_{i,j}]^{i=1,3,\dots,2n-1}_{j=\sigma_1,\sigma_3,\dots,\sigma_{2n-1}}.$$
\end{theorem}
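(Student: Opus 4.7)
The plan is to mirror the proof of \tref{tripartite}, translating each step from the grove setting to the double-dimer setting. The first step is to invoke the double-dimer analogue of \tref{KW1mainthm} (from \cite{KW:polynomial}):
$$\hPr(\sigma) \;=\; \sum_\tau c_{\sigma,\tau}\, X_\tau, \qquad X_\tau \;=\; \prod_{\{i,j\}\in\tau}X_{i,j},$$
an integer linear combination of monomials indexed by pairings $\tau$ of the $2n$ nodes. Because any double-dimer path in a bipartite graph joins a black vertex to a white vertex, $X_{i,j}=0$ whenever $i$ and $j$ have the same parity, so only pairings $\tau$ matching odd indices with even indices contribute nonzero terms; these are precisely the bijections indexing nonzero terms in the determinant expansion of the matrix in the statement.

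The second step is to determine which bipartite pairings~$\tau$ have $c_{\sigma,\tau}\neq 0$. The double-dimer uncrossing relation (the pairing restriction of the analogue of \lref{cross}) rewrites a crossing $ac\,|\,bd$ as a signed combination of $ab\,|\,cd$ and $ad\,|\,bc$. A parity check shows that exactly one of these two resolutions is itself bipartite (the other consists of two same-parity pairs, each of which gives $X=0$), so each uncrossing step rewrites $\tau$ as a single bipartite pairing one crossing closer to planarity, up to a sign. As in the proof of \tref{tripartite}, contiguity of the color blocks ensures that whenever the uncrossed crossing involves a monochromatic pair, at least three of the four crossed nodes share that color, so a monochromatic pair once present is never removed by further uncrossings. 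Since $\sigma$ itself contains no monochromatic pair, every $\tau$ with one contributes $c_{\sigma,\tau}=0$; and every bipartite $\tau$ with no monochromatic pair reduces uniquely to $\sigma$, yielding $c_{\sigma,\tau}=\pm1$.

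The final step, and the main obstacle, is to identify these signs with the signs of the corresponding permutations in the determinant expansion. I would argue by induction on the number of adjacent transpositions needed to carry $\tau$ to $\sigma$. For the base case $\tau=\sigma$ the sign equals $+1$ by a chord-diagram straightening argument analogous to the one at the end of the proof of \tref{tripartite}: in a non-crossing bipartite pairing the endpoints of each chord can be brought adjacent to one another by an even number of swaps. For the inductive step one must check that a single uncrossing of $\tau$ simultaneously flips the double-dimer projection sign and the permutation sign in the determinant; this comes down to matching the Kasteleyn-type sign built into the definition of $X_{i,j}$, and hence into the double-dimer bilinear form, with the sign of an adjacent transposition. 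The bookkeeping is in fact cleaner than in the proof of \tref{tripartite}, because each uncrossing now produces a single surviving term rather than the three that had to be balanced against the Pfaffian expansion.
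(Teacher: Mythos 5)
Your strategy---rerunning the proof of \tref{tripartite} natively in a double-dimer expansion $\hPr(\sigma)=\sum_\tau c_{\sigma,\tau}X_\tau$---is not the paper's route, and as set up it has a genuine gap at the branch-pruning step. The coefficient $c_{\sigma,\tau}$ is the coefficient of $\sigma$ in the planar expansion of $\tau$; the fact that $X_{i,j}=0$ when $i,j$ have equal parity kills final monomials $X_\tau$, but it does not let you delete, \emph{during} the uncrossing recursion, the resolution consisting of two same-parity pairs. Unlike monochromatic pairs, same-parity pairs are not persistent: an odd--odd pair crossed with an even--even pair resolves in both branches into odd--even pairs, so discarded branches can feed back into the coefficient of $\sigma$. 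Worse, your parity-pruning can conflict with your color-pruning. In the displayed example $\hPr(\,{}^1_2\!\mid\!{}^3_8\!\mid\!{}^5_4\!\mid\!{}^7_6\,)$, with $R=\{2,3,4\}$, $G=\{5,6\}$, $B=\{7,8,1\}$, take $\tau=12|36|58|74$, which is bipartite with no monochromatic pair. For every crossing of $\tau$ the bipartite resolution contains a monochromatic pair while the bichromatic resolution is non-bipartite (e.g.\ uncrossing $\{3,6\},\{5,8\}$ gives bipartite $\{3,8\},\{5,6\}$ with $\{5,6\}$ green, versus non-bipartite $\{3,5\},\{6,8\}$), so your scheme forces $c_{\sigma,\tau}=0$; but the determinant contains the term $+X_{1,2}X_{3,6}X_{5,8}X_{7,4}$, so in fact $c_{\sigma,\tau}=+1$. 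Hence the claim that every bipartite, non-monochromatic $\tau$ reduces uniquely to $\sigma$ with coefficient $\pm1$ fails.

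The second gap is the sign bookkeeping, which is where the content of the theorem lies: $X_{i,j}=\ZBW_{i,j}/\ZBW$ has no Kasteleyn sign ``built in,'' so nothing in your setup produces the alternating signs of the determinant. In the paper those signs enter through the transfer theorem from \cite{KW:polynomial} (substitute $L_{i,j}\to 0$ for equal parity and $L_{i,j}\to(-1)^{(|i-j|-1)/2}X_{i,j}$ otherwise into $\pu{\sigma}$), which you never invoke. The paper's proof simply applies this substitution to the Pfaffian formula already established in \tref{tripartite}, then converts the Pfaffian of the parity-reordered matrix into the stated determinant, tracking three separate sign cancellations (the row/column reordering by parity, the checkerboard of $(-1)^{(|i-j|-1)/2}$ signs, and the final column permutation by $\sigma$). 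A correct proof along your lines would have to retain both uncrossing branches and carry the $(-1)^{(|i-j|-1)/2}$ factors through the grove recursion---at which point you are essentially reconstructing the paper's argument in a harder way.
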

For example,
\begin{align*}
\raisebox{-24pt}{\includegraphics{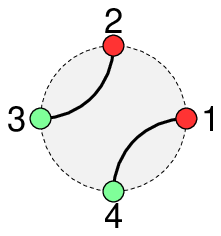}}\ \ \ \ \ 
 \hPr(\,{}^1_4\!\mid\!{}^3_2\,) =&
\begin{vmatrix}
  X_{1,4} & 0 \\
  0 & X_{3,2}
\end{vmatrix}\\
\intertext{(this first example formula is essentially Theorems~2.1 and~2.3 of \cite{kuo-condense}, see also \cite{kuo-pfaff} for a generalization different from the one considered here)
}
\raisebox{-24pt}{\includegraphics{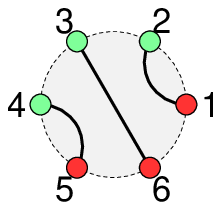}}\ \ \ \ \ 
 \hPr(\,{}^1_2\!\mid\!{}^3_6\!\mid\!{}^5_4\,)=&
\begin{vmatrix}
 X_{1,2} & 0       & X_{1,4} \\
 0       & X_{3,6} & 0       \\
 X_{5,2} & 0       & X_{5,4} \\
\end{vmatrix}
\end{align*}
\begin{align*}
\raisebox{-24pt}{\includegraphics{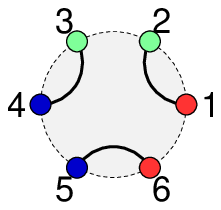}}\ \ \ \ \ 
 \hPr(\,{}^1_2\!\mid\!{}^3_4\!\mid\!{}^5_6)=& \begin{vmatrix}
 X_{1,2} & X_{1,4} & 0       \\
 0       & X_{3,4} & X_{3,6} \\
 X_{5,2} & 0       & X_{5,6}
\end{vmatrix}\\
\raisebox{-24pt}{\includegraphics{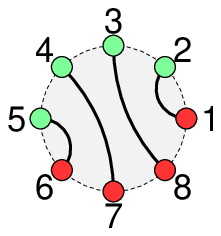}}\ \ \ \ \ 
\hPr(\,{}^1_2\!\mid\!{}^3_8\!\mid\!{}^5_6\!\mid\!{}^7_4\,)
=& \begin{vmatrix}
X_{1,2} & 0       & 0       & X_{1,4} \\
0       & X_{3,8} & X_{3,6} & 0       \\
0       & X_{5,8} & X_{5,6} & 0       \\
X_{7,2} & 0       & 0       & X_{7,4} \\
 \end{vmatrix}\\
\raisebox{-24pt}{\includegraphics{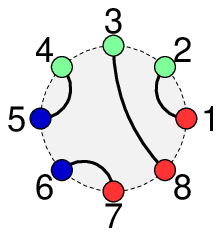}}\ \ \ \ \ 
\hPr(\,{}^1_2\!\mid\!{}^3_8\!\mid\!{}^5_4\!\mid\!{}^7_6\,)
=&\begin{vmatrix}
X_{1,2} & 0       & X_{1,4} & X_{1,6} \\
0       & X_{3,8} & 0       & X_{3,6} \\
X_{5,2} & X_{5,8} & X_{5,4} & 0       \\
X_{7,2} & 0       & X_{7,4} & X_{7,6} \\
\end{vmatrix}
\end{align*}

\begin{proof}
Recall our theorem from \cite{KW:polynomial}, which shows how to compute
the ``$X$'' polynomials for the double-dimer model in terms of the
``$L$'' polynomials for groves:

\begin{theorem}[Kenyon-Wilson '06]
  If a partition $\sigma$ contains only doubleton parts, then if we make
  the following substitutions to the grove partition polynomial $\pu{\sigma}$:
$$
 L_{i,j} \to \begin{cases} 0 &\text{if $i$ and $j$ have the same parity} \\
     (-1)^{(|i-j|-1)/2} X_{i,j} &\text{otherwise}\end{cases}
$$
then the result is the double-dimer pairing polynomial $\hPr(\sigma)$,
when we interpret $\sigma$ as a pairing.
\end{theorem}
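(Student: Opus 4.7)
The plan is to compare the grove and double-dimer connection polynomials via their parallel algebraic characterizations established in the companion paper~\cite{KW:polynomial}. On the grove side, Theorem~\ref{KW1mainthm} gives $\pu{\sigma}=\sum_\tau\p_{\sigma,\tau}\KL_\tau$. On the double-dimer side, an analogous result expresses $\hPr(\sigma)$ as a linear combination $\sum_\tau\pd_{\sigma,\tau}X_\tau$ over parity-respecting pairings~$\tau$, where $X_\tau=\prod_{\{i,j\}\in\tau}X_{i,j}$ and $\pd$ is the projection matrix for the double-dimer model determined by its own bilinear form. Showing that the specified substitution converts the first expansion into the second will prove the theorem.

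First I would apply the substitution to each $\KL_\tau$ in the grove expansion. A factor $L_{i,j}$ with $i\equiv j\pmod 2$ kills the summand, so only monomials corresponding to spanning forests whose edges all connect distinct-parity nodes survive. For $\tau$ a parity-respecting pairing, $\KL_\tau$ is a single monomial that becomes $\varepsilon(\tau)\,X_\tau$, where $\varepsilon(\tau)=\prod_{\{i,j\}\in\tau}(-1)^{(|i-j|-1)/2}$. For $\tau$ with a part of size $\geq 3$, the substituted $\KL_\tau$ is a sum of signed $X$-monomials indexed by distinct-parity spanning trees of that part; I would show, using Lemmas~\ref{split} and~\ref{drop-singletons} together with a direct cancellation argument, that these non-pairing contributions either vanish outright or reorganize into the parity-respecting pairing basis. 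The cleanest path here is to check that the substitution respects the equivalence $\eqz$ on the subspace spanned by pairings, so that the residual non-pairing terms aggregate to zero.

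Next, matching the substituted grove polynomial with the double-dimer expansion reduces the theorem to the coefficient identity $\p_{\sigma,\tau}\,\varepsilon(\tau)=\pd_{\sigma,\tau}$ for every parity-respecting pairing~$\tau$. Both $\p$ and $\pd$ are characterized by uncrossing recursions on pairings: the grove rule is \eqref{pairing-rule}, and its double-dimer counterpart is proved in~\cite{KW:polynomial}. I would verify the identity by induction on the number of crossings of~$\tau$. For planar~$\tau$ we have $\p_{\sigma,\tau}=\mathbf{1}_{\sigma=\tau}=\pd_{\sigma,\tau}$, and a direct parity count shows $\varepsilon(\tau)=+1$ (the exponents $(|i-j|-1)/2$ sum, modulo~2, to the crossing number of the associated chord diagram, which is zero for non-crossing pairings). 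For the inductive step, checking that $\varepsilon$ intertwines the two uncrossing rules reduces to a Kasteleyn-style identity on quadruples $i<j<k<l$.

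The main obstacle is this sign verification: under the grove rule $L_{i,k}L_{j,l}\mapsto -L_{i,l}L_{j,k}-L_{i,j}L_{k,l}$, the substituted Kasteleyn signs must combine with the antisymmetry $X_{i,j}=-X_{j,i}$ to reproduce exactly the double-dimer uncrossing rule of~\cite{KW:polynomial}; this is a finite parity calculation on the exponents $(|\cdot|-1)/2$, but requires careful bookkeeping. A secondary obstacle is the treatment of the non-pairing $\tau$ contributions in the first step: showing they produce no residual $X$-monomials is not merely a trivial vanishing (monomials containing a monochromatic $L$-factor do vanish, but mixed-parity spanning trees of tripleton parts produce genuine $X$-monomials like $X_{a,c}X_{c,d}$), so one must argue that summing $\p_{\sigma,\tau}$ times these contributions over all $\tau$ telescopes to zero, most elegantly by observing that the substitution map on partition polynomials factors through the kernel~$K_n$ of the bilinear form.
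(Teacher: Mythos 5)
First, a structural point: this paper does not actually prove the quoted statement --- it is imported verbatim from the companion paper \cite{KW:polynomial} and used as a black box inside the proof of \tref{dd-tripartite} --- so there is no in-paper argument to compare yours against, and your proposal must stand on its own. Its architecture (expand both sides over pairings, then match $\p_{\sigma,\tau}\,\varepsilon(\tau)$ against the double-dimer projection coefficients by intertwining the two uncrossing rules) is reasonable, but two of your concrete steps fail. The base case of your induction is false: for the non-crossing parity pairing $\tau=14|23$ one has $\varepsilon(\tau)=(-1)^{(|1-4|-1)/2}(-1)^{(|2-3|-1)/2}=-1$, while the crossing number is $0$. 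The exponents $(|i-j|-1)/2$ do not sum, mod $2$, to the crossing number; they record nesting, and for a planar parity pairing $\tau$ the product $\varepsilon(\tau)$ equals the sign of the permutation of the even nodes obtained by listing the partners of $1,3,\dots,2n-1$. This is exactly the ``global sign of $(-1)^\sigma$'' that the proof of \tref{dd-tripartite} must introduce and then cancel by reordering columns, and any correct proof of the substitution theorem has to thread this permutation sign through the induction. Indeed, your framework would have surfaced an apparent contradiction already at $n=2$: the stated substitution applied to $\pu{14|23}=L_{1,4}L_{2,3}-L_{1,3}L_{2,4}$ yields $-X_{1,4}X_{2,3}$, while $\hPr(14|23)=X_{1,4}X_{2,3}$, which shows that the phrase ``when we interpret $\sigma$ as a pairing'' is carrying a genuine sign convention from \cite{KW:polynomial} that your coefficient identity $\p_{\sigma,\tau}\,\varepsilon(\tau)=\pd_{\sigma,\tau}$ ignores.

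Second, your treatment of the non-pairing $\tau$ is wrong in mechanism. Distinct spanning forests give distinct monomials in the $L_{i,j}$'s, hence distinct monomials in the $X_{i,j}$'s after substitution, so the surviving mixed-parity contributions of a $\tau$ with a tripleton part (your $X_{a,b}X_{b,c}$ example) can never ``telescope to zero'' against contributions of other $\tau$'s: if any such $\tau$ had $\p_{\sigma,\tau}\neq 0$, those monomials would persist in the final polynomial, and no aggregation over $\tau$ could remove them. What actually saves the theorem is that these coefficients vanish identically, by the same argument this paper uses at the start of the proof of \tref{tripartite}: on $2n$ nodes, any partition with $n$ parts that has a part of size at least $3$ must also have a singleton part; the reduction moves of Lemmas \ref{newpart} and \ref{split} preserve singleton parts and the number of parts; and $\sigma$, being a pairing, has no singletons --- hence $\p_{\sigma,\tau}=0$ for every non-pairing $\tau$, and there is simply nothing to cancel. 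Replacing your vague ``factors through the kernel $K_n$'' step with this observation, and repairing the sign bookkeeping as above, would be the route to turning your outline into a correct proof.
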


Thus our Pfaffian formula for tripartite groves in terms of the
$L_{i,j}$'s immediately gives a Pfaffian formula for the double-dimer
model.  For the double-dimer tripartite formula there are
node parities as well as colors (recall that the graph is bipartite).
Rather than take a Pfaffian of the full
matrix, we can take the determinant of the odd/even submatrix,
whose rows are indexed by red-even, green-even, and blue-even
vertices, and whose columns are indexed by red-odd, green-odd, and
blue-odd vertices.
For example, when computing the probability
$$\hPr(\,{}^1_8\!\mid\!{}^3_4\!\mid\!{}^5_6\!\mid\!{}^7_2\,),$$
nodes $1$, $2$, and $3$ are red, $4$ and $5$ are green, and $6$,
$7$, and $8$ are blue; the odd nodes are black, and the even ones
are white.  The $L$-polynomial is
$$
\Pf
\begin{bmatrix}
0 &       0 &       0 & \fm L_{1,4} & \fm L_{1,5} & \fm L_{1,6} & \fm L_{1,7} & \fm L_{1,8}\\
0 &       0 &       0 & \fm L_{2,4} & \fm L_{2,5} & \fm L_{2,6} & \fm L_{2,7} & \fm L_{2,8}\\
0 &       0 &       0 & \fm L_{3,4} & \fm L_{3,5} & \fm L_{3,6} & \fm L_{3,7} & \fm L_{3,8}\\
-L_{4,1} &-L_{4,2} &-L_{4,3} &       0 &       0 & \fm L_{4,6} & \fm L_{4,7} & \fm L_{4,8}\\
-L_{5,1} &-L_{5,2} &-L_{5,3} &       0 &       0 & \fm L_{5,6} & \fm L_{5,7} & \fm L_{5,8}\\
-L_{6,1} &-L_{6,2} &-L_{6,3} &-L_{6,4} &-L_{6,5} &       0 &       0 &       0\\
-L_{7,1} &-L_{7,2} &-L_{7,3} &-L_{7,4} &-L_{7,5} &       0 &       0 &       0\\
-L_{8,1} &-L_{8,2} &-L_{8,3} &-L_{8,4} &-L_{8,5} &       0 &       0 &       0\\
\end{bmatrix}
$$
Next we do the substitution $L_{i,j}\to0$ when $i+j$ is even, and
reorder the rows and columns so that the odd nodes are listed first.
Each time we swap a pair of rows and do the same swap on the
corresponding pair of columns, the sign of the Pfaffian changes by
$-1$.  Since there are $2n$ nodes the number of swaps is $n(n-1)/2$.
If $n$ is congruent to $0$ or $1\bmod 4$ the sign does not change, and
otherwise it does change.  After the rows and columns have been sorted
by their parity, the matrix has the form
$$
\begin{bmatrix} 0 & \pm L_{O,E} \\ \mp L_{E,O} & 0 \end{bmatrix},
$$
where $O$ represents the odd nodes and $E$ the even nodes, and
where the individual signs are $+$ if the odd node has smaller index
than the even node, and $-$ otherwise.  The Pfaffian of this matrix is
just the determinant of the upper-right submatrix, times the sign of
the permutation $1,n+1,2,n+2,\dots,n,2n$, which is $(-1)^{n(n-1)/2}$.
This sign cancels the above $(-1)^{n(n-1)/2}$ sign.  In this example we get
$$
\det
\begin{bmatrix}
       0 & L_{1,4} & L_{1,6} & L_{1,8}\\
       0 & L_{3,4} & L_{3,6} & L_{3,8}\\
-L_{5,2} &       0 & L_{5,6} & L_{5,8}\\
-L_{7,2} &-L_{7,4} &       0 &       0\\
\end{bmatrix}.
$$
Next we do the $L_{i,j}\to (-1)^{(|i-j|-1)/2}X_{i,j}$ substitution.
The $i,j$ entry of this matrix is $(-1)^{i>j}L_{i,j}$.  Each time that
$i$ or $j$ are incremented or decremented by $2$, the
$(-1)^{(|i-j|-1)/2}$ sign will flip, unless the $(-1)^{i>j}$ sign also
flips.  After the substitution, the signs of the $X_{i,j}$ are
staggered in a checkerboard pattern.  If we then multiply every other
row by $-1$ and every other column by $-1$, the determinant is
unchanged and all the signs are $+$.  In the example we get
$$
\det
\begin{bmatrix}
       0 &-X_{1,4} & \fm X_{1,6} &-X_{1,8}\\
       0 & \fm X_{3,4} &-X_{3,6} &\fm X_{3,8}\\
\fm X_{5,2} &       0 & \fm X_{5,6} &-X_{5,8}\\
-X_{7,2} &\fm X_{7,4} &       0 &       0\\
\end{bmatrix}
=\det
\begin{bmatrix}
       0 & X_{1,4} & X_{1,6} & X_{1,8}\\
       0 & X_{3,4} & X_{3,6} & X_{3,8}\\
 X_{5,2} &       0 & X_{5,6} & X_{5,8}\\
 X_{7,2} & X_{7,4} &       0 &       0\\
\end{bmatrix}.
$$
There is then a global sign of $(-1)^\sigma$ where the sign of the
pairing~$\sigma$ is the sign of sign of the permutation of the even
elements when the parts are arranged in increasing order of their odd
parts.  In our example, the sign of
$\,{}^1_8\!\mid\!{}^3_4\!\mid\!{}^5_6\!\mid\!{}^7_2\,$ is the sign of
$8462$, which is $-1$.  This global sign may be canceled by reordering
the columns in this order, i.e., so that the pairing $\sigma$ can be
read in the indices along the diagonal of the matrix, which for our
example is
\begin{align*}
&
\det
\begin{bmatrix}
 X_{1,8} & X_{1,4} & X_{1,6} &       0 \\
 X_{3,8} & X_{3,4} & X_{3,6} &       0 \\
 X_{5,8} &       0 & X_{5,6} & X_{5,2} \\
       0 & X_{7,4} &       0 & X_{7,2} \\
\end{bmatrix}.
\end{align*}
\end{proof}

\section{Reconstruction on the ``standard graphs'' \texorpdfstring{$\Sigma_n$}{}}\label{reconstruction}

Given a planar resistor network, can we determine (or ``reconstruct'')
the conductances on the edges from boundary measurements, that is,
from the entries in the $L$ matrix?

While reconstruction is not possible in general, each planar graph is
equivalent, through a sequence of electrical transformations, to a
graph on which generically the conductances can be reconstructed.  Let
$\Sigma_n$ denote the standard graph on $n$ nodes, illustrated in
\fref{stdgraphs} for $n$ up to $6$.  Every circular planar graph
with $n$ nodes is electrically equivalent to a minor of a standard
graph $\Sigma_n$.

Here we will use the Pfaffian formulas to give explicit formulas for
reconstruction on standard graphs.  For minors of standard graphs, the
conductances can be computed by taking limits of the formulas for
standard graphs.

Curtis, Ingerman and Morrow \cite{\CIM} gave a recursive construction
to compute conductances for standard graphs from the $L$-matrix.  Card
and Muranaka \cite{card-muranaka} give another way.
Russell \cite{russell} shows how to recover the conductances, and
shows that they are rational functions of $L$-matrix entries.  However
the solution is sometimes given parametrically, as a solution to
polynomial constraints, even when graph is recoverable.

For a vertex $v\in\Sigma_n$ we define $\pi_v$ to be the tripod partition of
the nodes indicated in \fref{tripod}, with a single triple
connection joining the nodes $v_{\to}$ horizontally across from $v$ and
the two nodes $v_\uparrow,v_\downarrow$ vertically located from $v$ (in the same
column as $v$), and the remaining nodes joined in nested pairs between
$v_\to$ and $v_\uparrow$, $v_\uparrow$ and $v_\downarrow$, and $v_\downarrow$ and $v_\to$ (with up to two
singletons if $v_\to,v_\uparrow$ and/or $v_\to,v_\downarrow$ have an odd number of nodes
between them).

Similarly, for a bounded face $f$ of $\Sigma_n$ define $\pi_f$ to be the
tripartite partition of the nodes indicated in
\fref{dualtripod}.  It has three nested sequences of pairwise
connections (with two of the nested sequences going to the NE and SE, possibly
terminating in singletons).  We think of the unbounded face as
containing many ``external faces,'' each consisting of a unit square
which is adjacent to an edge of $\Sigma_n$.  For each of these external
faces, we define $\pi_f$ in the same manner as for internal faces.
For the external faces $f$ on the left of $\Sigma_n$, the ``left-going'' nested
sequence of $\pi_f$ is empty.  For the other external faces $f$, the
partition $\pi_f$ is $(1,n|2,n-1|\cdots)$, independent of~$f$.
\begin{figure}[htbp]
\psfrag{n1}[bl][Bl][1][0]{$v_\to$}
\psfrag{n2}[bc][Bc][1][0]{$v_\uparrow$}
\psfrag{n3}[tc][tc][1][0]{$v_\downarrow$}
\psfrag{f}[bc][Bc][1][0]{$f$}
\psfrag{o}[bc][Bc][1][0]{$v$}
\centerline{\includegraphics[height=0.38\textwidth]{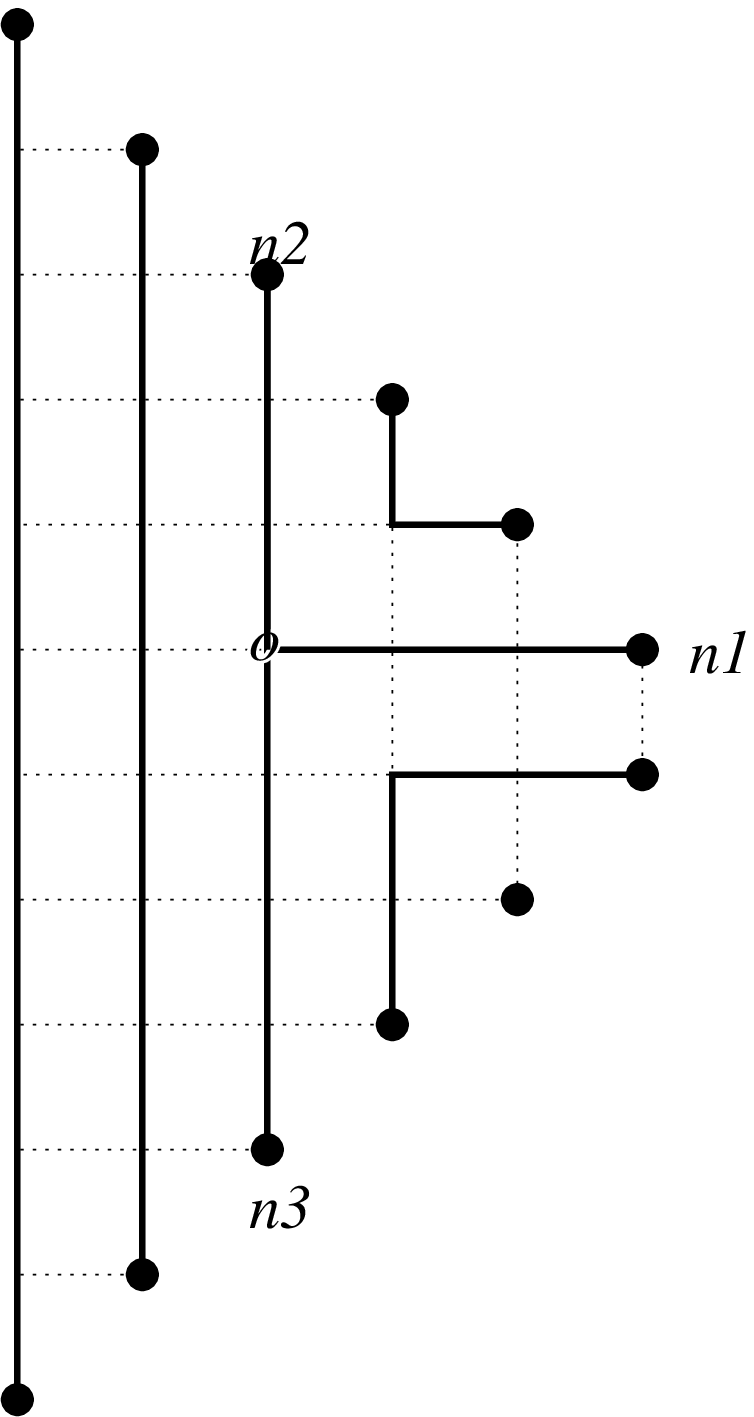}
\hfil
\includegraphics[height=0.38\textwidth]{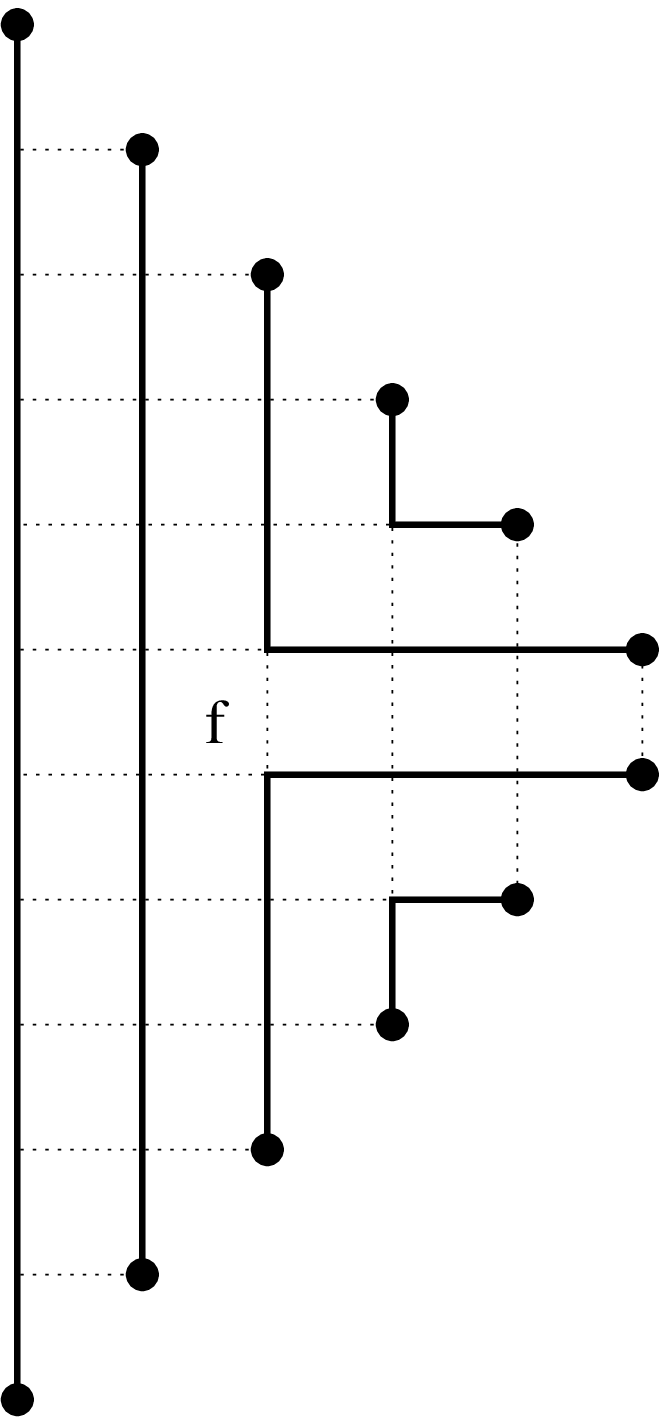}}
\caption{Tripod partition (left) and tripartite partition (right) on the standard graph~$\Sigma_n$.
  \label{tripod}  \label{dualtripod}}
\end{figure}

Observe that for the standard graphs $\Sigma_n$, there is only one
grove of type~$\pi_v$ or of type~$\pi_f$.  Let $a_e$ denote the
conductance of edge~$e$ in $\Sigma_n$.  Each $Z_{\pi_v}$ and
$Z_{\pi_f}$ is a monomial in these conductances $a_e$.  To simplify
notation we write $Z_v=Z_{\pi_v}$ and $Z_f=Z_{\pi_f}.$

Each conductance $a_e$ can be written in terms of the $Z_v$ and $Z_f$:

\begin{lemma}\label{afromL}
  For an edge $e$ of the standard graph~$\Sigma_n$, let $v_1$ and
  $v_2$ be the endpoints of $e$, and let $f_1$ and $f_2$ be the faces
  bounded by $e$.  We have
  $$a_e = \frac{Z_{v_1}Z_{v_2}}{Z_{f_1}Z_{f_2}}.$$
\end{lemma}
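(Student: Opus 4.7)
The plan is to exploit the fact that both sides of the claimed identity are actually monomials in the conductances $\{a_e\}$ of $\Sigma_n$. Indeed, by hypothesis there is a unique grove of type $\pi_v$ for each vertex $v$ and a unique grove of type $\pi_f$ for each (internal or external) face $f$, so $Z_v$ and $Z_f$ are each a single product $\prod_{e'} a_{e'}^{m_{e'}}$. Consequently the identity $a_e Z_{f_1} Z_{f_2} = Z_{v_1} Z_{v_2}$ is equivalent to a purely combinatorial statement about multiplicities of edges: for every edge $e'$ of $\Sigma_n$,
\begin{equation*}
m_{e'}(G_{v_1}) + m_{e'}(G_{v_2}) \;=\; m_{e'}(G_{f_1}) + m_{e'}(G_{f_2}) + [e' = e],
\end{equation*}
where $G_v$ and $G_f$ denote the unique groves of types $\pi_v$ and $\pi_f$ respectively (and $m_{e'}$ means $0$ or $1$, since groves are simple subgraphs).

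My first step is therefore to describe $G_v$ and $G_f$ explicitly inside $\Sigma_n$. The tripod partition $\pi_v$ joins the three ``axis'' nodes $v_\to, v_\uparrow, v_\downarrow$ at a single tree, with the remaining boundary nodes paired off in nested fashion within each of the three sectors cut out by the horizontal and vertical lines through $v$; correspondingly, $G_v$ should be read as the spanning forest which fills each such sector with the unique nested system of arcs forced by $\pi_v$ together with a ``Y'' at $v$. Dually, for a bounded face $f$, the tripartite partition $\pi_f$ produces three nested families of arcs in the three sectors around $f$, and $G_f$ is the unique grove realizing this. For external faces the description degenerates as stated in the excerpt. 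After drawing these groves one observes that on $\Sigma_n$ each $G_v$ (resp.\ $G_f$) consists of ``radial'' rooted paths assembled in a standard way.

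The second step is the local verification of the multiplicity identity. I would classify the edges of $\Sigma_n$ according to their position relative to $e$: (a) edges lying outside both pairs of sectors around $v_1, v_2$ and around $f_1, f_2$; (b) edges in sectors common to both a $v_i$ and an $f_j$; and (c) the edge $e$ itself. For edges of type (a) the multiplicity on each side is either $0+0$ or $1+1$ by inspection; for edges of type (b) the nested-arc structure around $v_i$ and around $f_j$ produces the same incidences on both sides; and for the edge $e$ it is exactly the ``Y'' at $v_1$ (or $v_2$) that picks up one extra incidence relative to the face groves, accounting for the $[e' = e]$ term.

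The main obstacle will be checking this local matching without missing cases, because the shape of the sectors, and whether their boundaries contain odd or even numbers of nodes (producing singletons in the nested pairings), varies with the position of $e$ in $\Sigma_n$; in particular, edges on the outer boundary of $\Sigma_n$ and those involving external faces need to be treated separately using the stated convention $\pi_f = (1,n \mid 2, n{-}1 \mid \cdots)$. Once the unique groves are drawn, however, the verification reduces to finitely many local pictures at an edge $e$ and its four incident faces/vertices, and no global combinatorics is needed.
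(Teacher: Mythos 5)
Your proposal takes essentially the same route as the paper, whose entire proof of this lemma is ``A straightforward inspection of the various cases'': since each $Z_v$ and $Z_f$ is a monomial coming from the unique grove of type $\pi_v$ or $\pi_f$ on $\Sigma_n$, the identity $a_e Z_{f_1}Z_{f_2}=Z_{v_1}Z_{v_2}$ reduces to exactly the edge-multiplicity bookkeeping you state, checked by drawing the explicit groves. Your reduction to the local multiplicity identity is a correct and somewhat more explicit formalization of that inspection.
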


\begin{proof} A straightforward inspection of the various cases.
\end{proof}

Combining this lemma with the results of Sections~\ref{tripodL}
and~\ref{dualtripodL}, we can write each edge conductance $a_e$ as a
rational function in the $L_{i,j}$'s.  Since the $Z_v$'s and $Z_f$'s
are irreducible by \tref{irreducible}, this formula is the simplest
rational expression for the $a_e$'s in terms of the $L_{i,j}$'s.

\section{Minors of the response matrix}\label{minors}

We have the following interpretation of the minors of $L$.

\begin{theorem}\label{Lminorthm}
  For general graphs (not necessarily planar), suppose that $A$, $B$,
  $C$, and $D$ are pairwise disjoint sets of nodes such that $|A|=|B|$
  and $A\cup B\cup C\cup D$ is the set of all nodes.  Then the
  determinant of $L_{(A\cup C),(B\cup C)}$ is given by
$$
\det [L_{i,j}]^{i=a_1,\dots,a_{|A|},c_1,\dots,c_{|C|}}_{j=b_1,\dots,b_{|B|},c_1,\dots,c_{|C|}}= \frac{\sum_\pi (-1)^{\pi} \Pr[a_1,b_{\pi(1)}|\cdots|a_{|A|},b_{\pi(|A|)}|d_1|\cdots|d_{|D|}]}{\Pr[\unc]}
$$
where the nodes of $C$ may appear in any of the above parts.
\end{theorem}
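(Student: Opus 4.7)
The plan is to prove this identity by expanding both sides as polynomials in the entries $L_{i,j}$ and matching coefficients monomial by monomial. On the left, the Leibniz formula gives
$$\det L_{(A\cup C),(B\cup C)} = \sum_f \operatorname{sgn}(f)\prod_{i\in A\cup C} L_{i,f(i)},$$
where $f$ ranges over bijections from $A\cup C$ to $B\cup C$. Identifying the two copies of $C$, each $f$ decomposes uniquely into $|A|$ vertex-disjoint oriented chains from $A$ to $B$ (each passing through some subset of $C$) together with a permutation of the remaining $C$-vertices, whose cycle structure records the ``closed'' orbits of $f$. The chains determine a bijection $\pi:A\to B$, and $\operatorname{sgn}(f)$ factors as $(-1)^\pi$ times a standard sign coming from the $C$-cycle permutation.

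On the right, I would apply \tref{KW1mainthm} to rewrite each $\pu{\sigma}$ as $\sum_\tau \p_{\sigma,\tau}\KL_\tau$, and then expand each $\KL_\tau$ as a sum over spanning forests of the complete graph on all $n$ nodes that are compatible with $\tau$. After interchanging sums, the RHS becomes a polynomial in the off-diagonal entries $L_{i,j}$ (with $i\ne j$) whose coefficient on each edge set is a signed combinatorial count involving $\pi$, the distribution of $C$-vertices among the parts, and the entries $\p_{\sigma,\tau}$. The goal is to identify this with the corresponding monomial expansion of the LHS.

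The main obstacle is the contributions from $C$-vertices that end up on closed orbits of $f$, including fixed-point loops that produce diagonal entries $L_{c,c}$. To dispose of these I would use the defining row-sum identity $L\mathbf{1}=0$ of the response matrix to substitute $L_{c,c} = -\sum_{j\ne c} L_{c,j}$, writing the full LHS as a polynomial purely in off-diagonal entries. A standard orientation-reversal pairing, in the spirit of the cancellation used to prove the classical matrix-tree theorem, then kills every contribution whose structure contains a non-trivial closed cycle on $C$: reversing the orientation of such a cycle preserves the monomial but flips $\operatorname{sgn}(f)$. What survives is a signed sum over configurations where the $C$-vertices sit entirely on the chains from $A$ to $B$, which is exactly the structure appearing on the RHS.

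The final task is to verify that the signs match. The factor $(-1)^\pi$ is already present on both sides, and the remaining bookkeeping reduces to a combinatorial statement about how a chain of $C$-vertices inserted into a part $\{a_i,b_{\pi(i)}\}$ (or $\{d_j\}$) contributes to $\p_{\sigma,\tau}$. This can be read off recursively from \lref{cross} together with the insertion lemmas \lref{newpart} and \lref{split}, which govern exactly how inserting an extra vertex into a part of a partition changes its expansion in planar partitions. I expect this last bookkeeping step to be the most intricate piece of the argument, but once the $C$-cycle cancellation has been carried out it is essentially forced by the multilinear nature of both expansions.
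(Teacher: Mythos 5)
Your plan has two genuine gaps, one of scope and one of substance. First, the right-hand side cannot be ``expanded as a polynomial in the $L_{i,j}$'' the way you propose. \tref{KW1mainthm} is a statement about circular planar graphs---the projection matrix $\p$ onto planar partitions plays no role otherwise---whereas \tref{Lminorthm} is asserted for general graphs, where the individual probabilities $\Pr[a_1,b_{\pi(1)}|\cdots|d_1|\cdots|d_{|D|}]$ are \emph{not} functions of the response matrix at all: the same $L$ is realized by a circular planar network (crossing pairings have probability zero) and by the complete graph on the nodes with conductances $L_{i,j}$ (crossing pairings have positive probability), and only the full alternating sum is determined by $L$---which is part of what the theorem asserts. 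So at best your argument addresses the planar case, and even there the step you defer as ``bookkeeping'' is the actual crux rather than a routine check: after the left side is rewritten as a signed forest sum it equals $\sum_\pi(-1)^\pi\sum_\tau \KL_\tau$, the inner sum over partitions $\tau$ each of whose parts contains exactly one element of $A\cup D$ and one of $B\cup D$ (with $C$ distributed arbitrarily), while the right side becomes $\sum_\pi(-1)^\pi\sum_\sigma\sum_\tau\p_{\sigma,\tau}\KL_\tau$ with $\sigma$ ranging over planar partitions compatible with $\pi$. Since distinct $\KL_\tau$ involve disjoint sets of monomials, you must prove that for \emph{every} partition $\tau$ the alternating sum of projection coefficients $\p_{\sigma,\tau}$ collapses to $(-1)^\pi$ or $0$ accordingly; that is a nontrivial identity about $\p$, not something forced by multilinearity, and Lemmas \ref{cross}--\ref{split} do not yield it without a genuine induction that you have not supplied.

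Second, the cancellation mechanism you describe for the left side fails as stated: $L$ is symmetric and a cycle and its inverse have the same permutation sign, so reversing the orientation of a closed cycle on $C$ preserves both the monomial and $\operatorname{sgn}(f)$ and cancels nothing. The correct route is the all-minors matrix-tree theorem applied to the weighted complete graph on the nodes with edge weights $L_{i,j}$ (legitimate because $L\mathbf{1}=0$), where cycle contributions cancel against terms arising from expanding the diagonal entries, not against their reversals; and the surviving forests have their $C$-vertices attached either to a pair $\{a_i,b_{\pi(i)}\}$ \emph{or to a single $d_j$}, not ``entirely on the chains from $A$ to $B$.'' By contrast, the paper's proof avoids all of this: it Schur-reduces, $\det L_{(A\cup C),(B\cup C)}=\det[L_{A,B}-L_{A,C}L_{C,C}^{-1}L_{C,B}]\,\det L_{C,C}$, observes that the Schur complement is the response matrix of the network with the $C$-nodes re-designated as internal (so the first factor is the known $C=\emptyset$ case of Curtis--Ingerman--Morrow), identifies $\det L_{C,C}$ by gluing the non-$C$ nodes as a ratio of uncrossing partition functions, and treats singular $L_{C,C}$ by a limit. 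To salvage your approach you would need (i) a replacement for \tref{KW1mainthm} valid for general graphs (e.g.\ working directly with grove partition functions in the edge weights), and (ii) an actual proof of the alternating-sum identity for $\p$ described above.
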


In \aref{dualtripodR}, equation~\eqref{Rform},
there is a corresponding formula in terms of
the pairwise resistances between nodes.

For an example of the Theorem, if there are 6 nodes, then
\begin{align*}
 \det L_{(1,2,3),(3,4,5)}
 =& \pu{15|24|6} - \pu{14|25|6} \\
 =& \pu{153|24|6} + \pu{15|243|6} + \pu{15|24|63} \\& - \pu{143|25|6} - \pu{14|253|6} - \pu{14|25|63},
\end{align*}
which for circular planar graphs is just
$\pu{15|243|6}$.

When $C=\emptyset$, this determinant formula is
equivalent to Lemma~4.1 of Curtis-Ingerman-Morrow \cite{\CIM},
though their formulation is a bit more complicated.
The formula $L_{i,j} = \pu{i,j|\text{rest singletons}}$ \cite[Proposition~2.8]{KW:polynomial} is a further specialization, with $A=\{i\}$ and $B=\{j\}$.

\begin{proof}[Proof of Theorem.]
We assume first that $L_{C,C}$ is non-singular.
By standard linear algebra
\begin{align}
\det L_{(A \cup C), (B \cup C)}
= \begin{vmatrix}L_{A,B}& L_{A,C} \\ L_{C,B}& L_{C,C}\end{vmatrix}
&=  \begin{vmatrix}L_{A,B}-L_{A,C}L_{C,C}^{-1}L_{C,B} & L_{A,C}L_{C,C}^{-1}\\0&I\end{vmatrix}
  \begin{vmatrix}I&0\\L_{C,B}&L_{C,C}\end{vmatrix} \notag\\
   &= \det [ L_{A,B} - L_{A,C} L_{C,C}^{-1} L_{C,B} ] \det L_{C,C} \label{schur}
\end{align}
Since this is essentially Schur reduction, $  L_{A,B} - L_{A,C} L_{C,C}^{-1} L_{C,B}
$
is the $A,B$ submatrix of the response matrix when nodes in $C$ are
redesignated as internal, so by Lemma~4.1 of Curtis-Ingerman-Morrow \cite{\CIM},
\begin{equation} \label{schur-1}
\det [ L_{A,B} - L_{A,C} L_{C,C}^{-1} L_{C,B} ] =
\frac{\text{signed sum of pairings from $A$ to $B$ when $C$ is internal}}{
\text{uncrossing when $C$ is internal}}.
\end{equation}

If we glue the nodes not in $C$ together, the response matrix of the
resulting graph has $L_{C,C}$ as a co-dimension $1$ submatrix, so by
Lemma~A.1 of Kenyon-Wilson \cite{KW:polynomial},
\begin{equation} \label{schur-2}
 \det L_{C,C} =
 \frac{\text{forests rooted at $A\cup B\cup D$}}{\text{uncrossing}} =
 \frac{\text{uncrossing when $C$ is internal}}{\text{uncrossing}}.
\end{equation}
Combining Equations~\ref{schur}, \ref{schur-1}, and~\ref{schur-2}
gives the result for nonsingular $L_{C,C}$.

The case of singular $L_{C,C}$ can be obtained as a limit of the
above nonsingular case.
\end{proof}

\section{Carroll-Speyer groves}\label{CSgroves}

Here we study the groves of Carroll and Speyer.  For Carroll and
Speyer's work, the relevant graph is a triangular portion of the
triangular grid, shown in \fref{csgrove}.  Carroll and Speyer computed
the number of groves on this graph which form a tripod grove (for $N$
even) or a tripartite grove (for $N$ odd).  The relevant tripod or
tripartite partition is the one for which the three sides of the
triangular region form the three color classes, and each part connects
nodes with different colors.  For the case $N=6$, the relevant tripod
partition is $1,17|2,16|3,9,15|4,8|5,7|10,14|11,13|6|12|18$, and an
example grove is shown in \fref{csgrove}.  The number of such groves
turns out to always be a power of $3$, specifically, when there are
$3N$ nodes, there are $3^{\lfloor N^2/4\rfloor}$ groves.  In this
section we consider these graphs as a test case for our methods for
counting groves.  There is much that we can compute, but we do not
know how at present to obtain a second derivation of the power-of-$3$
formula.

\begin{figure}[b!]
\centerline{\includegraphics{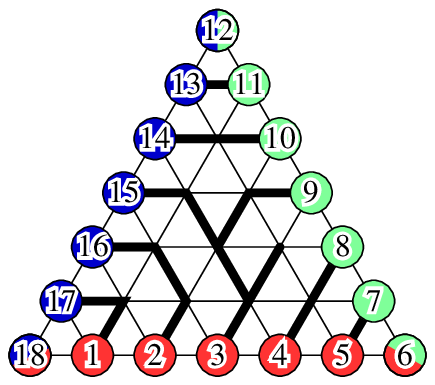}\hfil\includegraphics{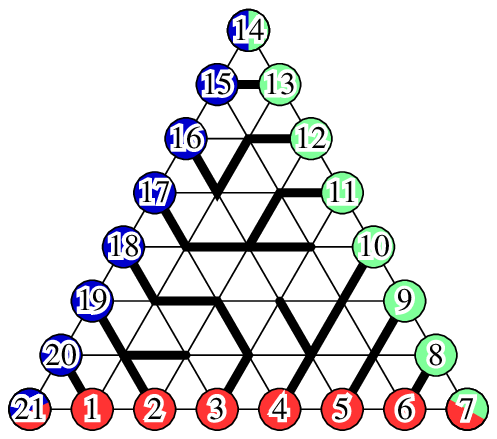}}
\caption{
  Carroll-Speyer graph for $N=6$ (left) and $N=7$ (right), each shown
  with one of its Caroll-Speyer groves.  The graphs have $n=3N$ nodes;
  for even~$N$ the grove partition type is a (generalized) tripod,
  while for odd~$N$ the grove partition type is a (generalized)
  tripartite pairing.  The number of Carroll-Speyer groves is
  $3^{\lfloor N^2/4\rfloor}$ \cite{\CS}.  \label{csgrove}}
\end{figure}

We need the entries of the $L$ matrix in order to compute the
connection probabilities using the Pfaffian and Pfaffianoid formulas
presented in \sref{dualtripodL} and \sref{tripodL}.  To compute the
tripartite connection probabilities, we need those entries of the $L$
matrix whose indices come from different sides of the triangle.  From
symmetry considerations, it suffices to consider the entries between
the first two sides.  In the $N=6$ example from \fref{csgrove}, the
submatrix of the $L$ matrix with rows indexed by the nodes on side~1
(excluding corners) and columns indexed by the nodes on side~2
(excluding corners) is given by
\begin{align*}
\begin{bmatrix}
 L_{1,7} & L_{1,8} & L_{1,9} & L_{1,10} & L_{1,11} \\
 L_{2,7} & L_{2,8} & L_{2,9} & L_{2,10} & L_{2,11} \\
 L_{3,7} & L_{3,8} & L_{3,9} & L_{3,10} & L_{3,11} \\
 L_{4,7} & L_{4,8} & L_{4,9} & L_{4,10} & L_{4,11} \\
 L_{5,7} & L_{5,8} & L_{5,9} & L_{5,10} & L_{5,11}
\end{bmatrix}
&=
\begin{bmatrix}
 \frac{31}{9456} & \frac{25}{2364} & \frac{23}{1576} & \frac{25}{2364} & \frac{31}{9456} \\
 \frac{37}{2364} & \frac{445}{9456} & \frac{23}{394} & \frac{355}{9456} & \frac{25}{2364} \\
 \frac{87}{1576} & \frac{53}{394} & \frac{97}{788} & \frac{23}{394} & \frac{23}{1576} \\
 \frac{529}{2364} & \frac{3043}{9456} & \frac{53}{394} & \frac{445}{9456} & \frac{25}{2364} \\
 \frac{11167}{9456} & \frac{529}{2364} & \frac{87}{1576} & \frac{37}{2364} & \frac{31}{9456}
\end{bmatrix}
\\&=
\begin{bmatrix}
 \phantom{-2015}\mathllap{1}     & \phantom{-2015}\mathllap{-1}    & \phantom{-208}\mathllap{1}    & \phantom{-31}\mathllap{-1} & \fm 1 \\
 \phantom{-2015}\mathllap{-31}   & \phantom{-2015}\mathllap{24}    & \phantom{-208}\mathllap{-16}  & \phantom{-31}\mathllap{8} & -1 \\
 \phantom{-2015}\mathllap{361}   & \phantom{-2015}\mathllap{-208}  & \phantom{-208}\mathllap{81}   & \phantom{-31}\mathllap{-16} & \fm 1 \\
 \phantom{-2015}\mathllap{-2015} & \phantom{-2015}\mathllap{888}   & \phantom{-208}\mathllap{-208} & \phantom{-31}\mathllap{24} & -1 \\
 \phantom{-2015}\mathllap{5297}  & \phantom{-2015}\mathllap{-2015} & \phantom{-208}\mathllap{361}  & \phantom{-31}\mathllap{-31} & \fm 1
\end{bmatrix}
^{-1}.
\end{align*}
We explain here why the inverse of this submatrix is integer-valued,
and how to interpret the entries.

Recall \tref{Lminorthm} on minors of the $L$ matrix.  Letting $A$, $B$, and
$C$ denote the nodes on the first, second, and third sides respectively,
$$\det[L_{i,j}]^{i\in A}_{j\in B} = \frac{Z(\text{nodes of $A$ paired with nodes of $B$, nodes of $C$ singletons})}{Z(\unc)} = \frac{1}{Z(\unc)}.
$$
Likewise
\newcommand{\nearfull}{Z\left(\genfrac{}{}{0pt}{0}{\text{nodes of $A\setminus\{i_0\}$ paired with nodes of $B\setminus\{j_0\}$,}}{\text{nodes of $C\cup\{i_0,j_0\}$ singletons}}\right)}
$$\det[L_{i,j}]^{i\in A\setminus\{i_0\}}_{j\in B\setminus\{j_0\}} = \frac{\nearfull}{Z(\unc)}.
$$
Thus the $i_0,j_0$ entry of the inverse of the above matrix is
$$ \left([L_{i,j}]^{i\in A}_{j\in B}\right)^{-1}_{j_0,i_0} = (-1)^{i_0+j_0} \nearfull.$$
When there are edge weights, the $i_0,j_0$ entry of the inverse matrix
will be given by the corresponding polynomial in the edge weights.

To get the normalized probability of the tripartite partition (for odd~$N$), the
Pfaffian we need is
$$
\Pf \begin{bmatrix}
\fm 0 & \fm L_{A,B} & \fm L_{A,C} \\
- L_{B,A} & \fm 0 & \fm L_{B,C} \\
- L_{C,A} & -L_{C,B} & \fm 0
\end{bmatrix}
=
\Pf \begin{bmatrix}
\fm 0 & \fm L_{A,B} & \fm L_{A,B}^T \\
- L_{A,B}^T & \fm 0 & \fm L_{A,B} \\
- L_{A,B} & -L_{A,B}^T & \fm 0
\end{bmatrix}
$$
which in the case $N=7$ gives
$\pu{\text{tripartite grove}}=531441/135418115000$.
The calculations for the tripod partitions for even~$N$ is similar,
except that we take a Pfaffianoid rather than a Pfaffian.

To compute the number (as opposed to probability) of groves of a given type, we also need the number of
spanning forests rooted at the nodes.  The number of spanning forests
may be computed from the graph Laplacian using the matrix-tree theorem,
which yields the following formula
$$
\prod_{\substack{\{\alpha,\beta,\gamma\}\\ \alpha^{3N)}=1 \\ (\alpha/\beta)^{N}=1 \\ \alpha\beta\gamma=1 \\ \alpha,\beta,\gamma\text{\ distinct}}} (6-\alpha-\alpha^{-1}-\beta-\beta^{-1}-\gamma-\gamma^{-1})
$$
(see \cite[\S~6.9]{KPW} for the derivation of a similar formula).  In the case
$N=7$ this formula yields $135418115000$, so there are $531441 =
3^{12} = 3^{\lfloor 7^2/4\rfloor}$ tripartite groves, in agreement
with Carroll and Speyer's formula.  Is it possible to derive the
$3^{\lfloor N^2/4\rfloor}$ formula using this approach?

\appendix

\section{Pfaffianoid formula for tripod partitions}\label{sec:pfd}

\begin{proof}[Proof of \tref{thm:tripod}]
  Any column partition $\tau$ contributing to $\sigma$ will have
  $(n-1)/2$ parts (as $\sigma$ does) and no singleton parts, and as such
  it will consist of a single tripleton part together with doubleton
  parts.  To determine what $\tau$ contributes to $\sigma$, we may use
  the following abbreviated rules.  For two crossing
  doubletons, as in \eqref{pairing-rule} we
  use
$$  13|24 \to - 12|34 - 14|23.$$

For a crossing doubleton and a tripleton, recall (Lemmas~\ref{cross}
and \ref{split}) that
\begin{align*}
  13|24  &\equiv 1|234  +  2|134  +  3|124  +  4|123  -  12|34  -  14|23 \\
  13|245 &\equiv 1|2345 +  2|1345 +  3|1245 +  45|123 -  12|345 -  145|23
\intertext{so we may use the rule}
  13|245 &\to 45|123 - 12|345 - 23|154
\end{align*}

After we apply the $13|245$ rule, let us consider another doubleton
part.  If the doubleton part did not cross $13$ or $245$, it will
cross none of $45$, $123$, $12$, $345$, $23$, or $154$.  Otherwise it
is one of the following forms:
\begin{center}
\begin{tabular}{c| c@{$|$}c c c@{$|$}c c@{$|$}c c@{$|$}c}
        crosses&13&245 &$\to$&   45&123 & 12&345 & 23&154 \\
\hline
 .5, 1.5       &y &n   &&  n &y   & y &n   & n &y   \\
 .5, 2.5       &y &y   &&  n &y   & n &n   & y &y   \\
 .5, 3.5       &n &y   &&  n &n   & n &y   & n &y   \\
 .5, 4.5       &n &y   &&  y &n   & n &y   & n &y   \\
1.5, 2.5       &n &y   &&  n &y   & y &n   & y &n   \\
1.5, 3.5       &y &y   &&  n &y   & y &y   & n &n   \\
1.5, 4.5       &y &y   &&  y &y   & y &y   & n &y   \\
2.5, 3.5       &y &n   &&  n &y   & n &y   & y &n   \\
2.5, 4.5       &y &y   &&  y &y   & n &y   & y &y   \\
3.5, 4.5       &n &y   &&  y &n   & n &y   & n &y   \\
\end{tabular}\end{center}
In each case the doubleton part crosses at most as many other parts in
the new partitions as it did in the old partition.  Thus the number of
crossing parts strictly decreases.

After we apply the $13|24$ rule, we saw already that the number of
crossings amongst doubleton parts strictly decreases.  Now let us
consider crossings amongst doubleton parts and the tripleton part.
The tripleton part divides the vertices into three sectors.  The
distribution of $1,2,3,4$ amongst these sectors is one of
\begin{itemize}
\item
  All four endpoints in same sector.
    Before rule neither of the chords 13,24 cross the tripleton,
    nor do any of them after the rule.
\item
  Three endpoints in same sector.
    Before rule one chord crosses tripleton,
    after rule one chord crosses tripleton.
\item
  At least one sector contains exactly two endpoints.
    If we apply the rule then it must be that the chords crossed,
    so both chords cross the tripleton.
    After rule, in one partition two chords cross tripleton, in other
    partition neither chord crosses tripleton.
\end{itemize}
In any event, the total number of crossing parts strictly decreases.

We may repeatedly apply the $13|245$ rule and $13|24$ rule, in any order,
until no two parts cross, and we are guaranteed to obtain a linear
combination of planar partitions containing a tripleton part and rest
doubleton parts.

Recall the rule
 $$ 13|245 \to 45|123 - 12|345 - 23|154.$$
Suppose that at least one of these parts has two or more vertices of
the same color, say red.  We have the following possibilities:
\renewcommand{\R}[1]{\overset{r}{#1}}
\begin{align*}
  \R1\R3|\R245 &\to 45|\R1\R2\R3 - \R1\R2|\R345 - \R2\R3|\R154
\\
  13|2\R4\R5 &\to \R4\R5|123 - 12|3\R4\R5 - 23|1\R5\R4
\\
  1\R3|\R2\R45 &\to \R45|1\R2\R3 - 1\R2|\R3\R45 - \R2\R3|15\R4
\\
  \R13|\R24\R5 &\to 4\R5|\R1\R23 - \R1\R2|34\R5 - \R23|\R1\R54
\end{align*}
(the remaining cases contain more reds than these).
In each case, each of the resulting partitions has a part with two or more
vertices of the same color, so by induction, such partitions will
not contribute to $\sigma$.  Thus we may restrict attention to partitions
$\tau$ with bichromatic doubleton parts and a trichromatic tripleton part.
\renewcommand{\R}{\mathbb{R}}

We take another look at the rule
$$13|245 \to 45|123 - 12|345 - 23|154.$$
Two colors occur twice.  There are three possibilities (up to
renamings of color):
\begin{center}\begin{tabular}{lcl}
& 5,1 red and 3,4 blue & in which case we may use the rule $13|245 \to 45|123$   \\
($*$)&  5,1 red and 2,3 blue & in which case we may use the rule $13|245 \to - 12|345$ \\
 ($**$)& 1,2 red and 3,4 blue & in which case we may use the rule $13|245 \to - 23|154$
\end{tabular}\end{center}
(And similarly for the $13|24$ rule, either $13|24 \to -12|34$ or $13|24 \to -14|23$.)

Thus each maximally multichromatic partition $\tau$ contributes either
$+1$ or $-1$ to $\sigma$.

Let us compare the contributions to $\sigma$ of partitions~$\tau$ which
contain doubleton parts that do not cross one another, and a tripleton
part which may cross the doubleton parts.  Any such partition $\tau$
may be obtained from $\sigma$ by repeatedly transposing items of the
tripleton with their neighbors.  From the above rules ($*$)
and ($**$) we see that each
such transposition changes the sign of $\tau$'s contribution to
$\sigma$.

Next we consider what happens if we leave the tripleton part alone and
only apply the $13|24$ rule.  Upon deleting the tripleton part, and
recalling our earlier analysis of the tripartite pairing
(\tref{tripartite}), we see that
each partition $\tau$ contributes to $\sigma$ a sign which involves
moving the tripod to its desired location in $\sigma$, times the sign in
the corresponding Pfaffian in which rows and columns
of items of tripleton part have been deleted.  This gives us the
tripod Pfaffianoid formula:
\begin{multline*}
(-1)^\text{sum of items in $\sigma$'s tripleton part} \times \\
  \sum_{a \in R, b \in G, c \in B}
       (-1)^{a+b+c} (L_{a,b} L_{b,c} + L_{a,b} L_{a,c} + L_{a,c} L_{b,c}) \Pf(R\smallsetminus\{a\};G\smallsetminus\{b\};B\smallsetminus\{c\}).
\end{multline*}
\end{proof}

\section{Tripartite  pairings in terms of Pfaffians in the resistances}
\label{dualtripodR}

There is a formula analogous to the one in \tref{tripartite} for the
probability of a tripartite partition, expressing it in terms of
the pairwise electrical resistances~$R_{i,j}$ rather than the response
matrix entries~$L_{i,j}$.  The normalization is also different, rather
than normalizing by $\Pr(1|2|\cdots|n)$, we normalize by $\Pr(12\cdots n)$.
The corresponding formula is illustrated by the following example:
\begin{multline}
\frac{\Pr(16|23|45)}{\Pr(123456)} = \\
[t]\Pf\!\left[\begin{matrix}
 0         &  0         &\plt-\frac12 R_{1,3} &\plt-\frac12 R_{1,4} &\plt-\frac12 R_{1,5} &\plt-\frac12 R_{1,6} \\
 0         &  0         &\plt-\frac12 R_{2,3} &\plt-\frac12 R_{2,4} &\plt-\frac12 R_{2,5} &\plt-\frac12 R_{2,6} \\
-t+\frac12 R_{1,3} & -t+\frac12 R_{2,3} &  0         &  0         &\plt-\frac12 R_{3,5} &\plt-\frac12 R_{3,6} \\
-t+\frac12 R_{1,4} & -t+\frac12 R_{2,4} &  0         &  0         &\plt-\frac12 R_{4,5} &\plt-\frac12 R_{4,6} \\
-t+\frac12 R_{1,5} & -t+\frac12 R_{2,5} & -t+\frac12 R_{3,5} & -t+\frac12 R_{4,5} &  0         &  0         \\
-t+\frac12 R_{1,6} & -t+\frac12 R_{2,6} & -t+\frac12 R_{3,6} & -t+\frac12 R_{4,6} &  0         &  0
\end{matrix}\right]
\end{multline}
Here the Pfaffian is a polynomial in $t$, in fact a linear function of
$t$, and the coefficient of the linear term gives $\Pr(16|23|45)/\Pr(123456)$.

We prove here this formula when one of the color classes is empty, so
that the Pfaffian is actually a determinant (see \tref{LtoR}).  The
general tripartite Pfaffian formula follows from the bipartite
determinant special case, together with a result that we prove in our
next article that for any planar pairing, the $L$-polynomial is a
linear combination of such determinants \cite{KW-marginals}.

\newcommand{\iL}{{\tilde L}^{-1}}

Recall that any codimension-1 minor of $L$ is the ratio of the
spanning trees to the uncrossing.  Let $\tilde L$ be the $(n-1)\times
(n-1)$ submatrix of $L$ obtained by deleting the last row and column.
Recall that $R_{i,j}=\iL_{i,i}+\iL_{j,j}-2\iL_{i,j}$, where
$\iL_{i,j}$ is interpreted to be $0$ if either $i=n$ or $j=n$
(see \cite[Section~A.2]{KW:polynomial}).

\begin{lemma} \label{LR}
  For each row of $I+L R/2$, the row's entries are all the same.
\end{lemma}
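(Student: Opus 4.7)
The plan is a direct calculation, exploiting two structural facts about $L$: (i) the all-ones vector lies in the kernel of $L$, i.e.\ $\sum_k L_{i,k}=0$ for every $i$; and (ii) with the convention $\iL_{i,n}=\iL_{n,j}=0$, the matrix $\iL$ is a one-sided inverse of $L$ in the sense that $\sum_k L_{i,k}\iL_{k,j}=\delta_{i,j}$ whenever $i\ne n$.

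First I would expand $(LR)_{i,j}$ by plugging in $R_{k,j}=\iL_{k,k}+\iL_{j,j}-2\iL_{k,j}$. The middle term contributes $\iL_{j,j}\sum_k L_{i,k}$, which vanishes by (i). Thus
\[
(LR)_{i,j}=\sum_k L_{i,k}\iL_{k,k}-2\sum_k L_{i,k}\iL_{k,j},
\]
and the first sum is a quantity $2c_i$ depending only on $i$. So it remains to analyze $\sum_k L_{i,k}\iL_{k,j}$.

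Next I would split into cases on whether $i=n$ or $j=n$. When $i\ne n$ and $j\ne n$, the sum collapses (the $k=n$ term vanishes) to $\sum_{k\ne n}\tilde L_{i,k}\iL_{k,j}=\delta_{i,j}$ by (ii). When $i\ne n$ and $j=n$ the sum is $0$ since $\iL_{k,n}=0$, which agrees with $\delta_{i,n}=0$. Hence for every $i\ne n$,
\[
\left(I+\tfrac12 LR\right)_{i,j}=\delta_{i,j}+c_i-\delta_{i,j}=c_i,
\]
so row $i$ is constant. For the remaining row $i=n$, use (i) in the form $L_{n,k}=-\sum_{\ell\ne n}L_{\ell,k}$ to write
\[
\sum_k L_{n,k}\iL_{k,j}=-\sum_{\ell\ne n}\sum_k L_{\ell,k}\iL_{k,j}=-\sum_{\ell\ne n}\delta_{\ell,j}=\delta_{j,n}-1,
\]
so that $(I+\tfrac12 LR)_{n,j}=\delta_{n,j}+c_n-(\delta_{j,n}-1)=c_n+1$, again constant in $j$.

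There is no real obstacle here beyond bookkeeping; the only subtle point is treating the degeneracy of $L$ correctly, i.e.\ remembering that $\iL$ inverts $L$ only after the $n$-th row and column are stripped, so one has to separate the cases involving index $n$ as above. Once that is done, the lemma follows immediately, with the explicit constants $c_i=\tfrac12\sum_k L_{i,k}\iL_{k,k}$ for $i\ne n$ and $c_n+1$ for the last row.
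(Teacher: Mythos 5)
Your proof is correct and takes essentially the same route as the paper: expand $R_{k,j}$ in terms of $\tilde L^{-1}$, kill the $\tilde L^{-1}_{j,j}$-term using $\sum_k L_{i,k}=0$, and use that $\tilde L^{-1}$ inverts $L$ away from the index $n$, so that each row of $I+LR/2$ collapses to a constant. The only difference is cosmetic: the paper handles the $n$\th row by re-indexing (the choice of deleted node is arbitrary), whereas you compute it directly from the vanishing column sums; both are fine.
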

\begin{proof}
Suppose $i\neq n$.  Then
\begin{align*}
\sum_{j} L_{i,j} R_{j,k}
 &=
\sum_{j} L_{i,j} [\iL_{j,j}+\iL_{k,k}-2\iL_{j,k}]\\
 &=
\sum_{j} L_{i,j} [R_{j,n}+0]-2\delta_{i,k}.
\end{align*}
So for any $i\neq n$, the $i$\th row of $I+L R/2$ is constant.  By
re-indexing the rows, we see that this must hold for the $n$\th row as
well.
\end{proof}

\begin{lemma}
  The diagonal entries of $R L R$ are all the same.
\end{lemma}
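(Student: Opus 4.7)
My plan is to leverage the previous lemma together with symmetry and associativity of matrix multiplication, rather than expanding entries.

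First, I would rewrite the conclusion of \lref{LR} in matrix form: since every row of $I + LR/2$ is constant, there exists a column vector $b$ with $LR = b\mathbf{1}^T - 2I$, where $\mathbf{1}$ denotes the all-ones column vector. Taking transposes and using that both $L$ and $R$ are symmetric yields the companion identity $RL = \mathbf{1}b^T - 2I$. (As a sanity check, multiplying $LR = b\mathbf{1}^T - 2I$ on the left by $\mathbf{1}^T$ and using $\mathbf{1}^T L = 0$ gives $\mathbf{1}^T b = 2$, so $b$ is in particular nontrivial; this fact is not needed for the proof.)

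Second, I would compute $RLR$ in two ways using associativity. On the one hand,
\[
RLR = R(LR) = R\bigl(b\mathbf{1}^T - 2I\bigr) = (Rb)\mathbf{1}^T - 2R.
\]
On the other hand,
\[
RLR = (RL)R = \bigl(\mathbf{1}b^T - 2I\bigr)R = \mathbf{1}(b^T R) - 2R.
\]
Equating the two expressions gives $(Rb)\mathbf{1}^T = \mathbf{1}(b^T R)$. Reading off the $(i,j)$ entry of each side and using that $R$ is symmetric (so $b^T R = (Rb)^T$), this identity forces $(Rb)_i = (Rb)_j$ for all $i,j$; in other words, $Rb$ is a scalar multiple of $\mathbf{1}$.

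Third, I would extract the diagonal. From $RLR = (Rb)\mathbf{1}^T - 2R$, we read off
\[
(RLR)_{k,k} = (Rb)_k - 2R_{k,k} = (Rb)_k,
\]
since $R_{k,k} = 0$ (the resistance from a node to itself is zero). Because $Rb$ is constant, this value is independent of $k$, which is the claim.

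There is no real obstacle here beyond correctly packaging \lref{LR} in matrix form; the entire argument is two lines of symbolic manipulation once one spots that associativity $R(LR) = (RL)R$ exchanges the roles of the rank-one perturbation on the left and the right.
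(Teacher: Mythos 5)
Your proof is correct, and it takes a genuinely different route from the paper's. The paper proves this lemma by brute-force expansion of $\sum_{j,k} R_{i,j}L_{j,k}R_{k,i}$ using $R_{i,j}=\iL_{i,i}+\iL_{j,j}-2\iL_{i,j}$ (with $\iL$ the inverse of the reduced response matrix), repeatedly exploiting that $L$ kills constant vectors and that $L\iL$ acts as the identity; the cancellations leave the explicit $i$-independent value $\sum_{j,k}\iL_{j,j}L_{j,k}\iL_{k,k}$, and the argument does not invoke \lref{LR} at all. You instead package \lref{LR} as the matrix identity $LR=b\mathbf{1}^T-2I$, transpose it using the symmetry of $L$ and $R$ (both standard for a reciprocal network, and manifest from the paper's formula for $R$ in terms of $\iL$) to get $RL=\mathbf{1}b^T-2I$, and then let associativity of $R(LR)=(RL)R$ force $Rb$ to be a constant vector; combined with $R_{k,k}=0$ this gives constancy of the diagonal of $RLR$. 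Your argument is shorter and more conceptual, at the price of depending on \lref{LR} and on symmetry, and it yields the common value only implicitly as $(Rb)_k$; the paper's computation is self-contained and produces an explicit formula for the common diagonal entry, though that formula is not actually needed later (only the constant $Q$ is used). All steps in your write-up check out, including the identification $I+LR/2=\tfrac12 b\mathbf{1}^T$ and the reading-off of entries in $(Rb)\mathbf{1}^T=\mathbf{1}(b^TR)$.
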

\begin{proof}
We have
\begin{align*}
\sum_{j,k} R_{i,j} L_{j,k} R_{k,i}
 &=
\sum_{j,k} (\iL_{i,i}+\iL_{j,j}-2\iL_{i,j}) L_{j,k} (\iL_{i,i}+\iL_{k,k}-2\iL_{i,k}).
\end{align*}
{The third factor contains no $j$ subscripts, and neither does the $\iL_{i,i}$ term of the first factor, so summing over $j$ gives $0$, and we may drop the $\iL_{i,i}$ term in the first factor.  Similarly, we may drop the $\iL_{i,i}$ term in the third factor.}
\begin{align*}
\sum_{j,k} R_{i,j} L_{j,k} R_{k,i}
 &=
\sum_{j,k} (\iL_{j,j}-2\iL_{i,j}) L_{j,k} (\iL_{k,k}-2\iL_{i,k})\\
 &=
\sum_{j,k} \iL_{j,j} L_{j,k} \iL_{k,k}
-2\sum_{j,k} \iL_{j,j} L_{j,k} \iL_{i,k}
-2\sum_{j,k} \iL_{i,j} L_{j,k} (\iL_{k,k}-2\iL_{i,k})\\
&=
\sum_{j,k} \iL_{j,j} L_{j,k} \iL_{k,k}
-2\sum_{j} \iL_{j,j} \delta_{j,i}
-2\sum_{k} \delta_{i,k} (\iL_{k,k}-2\iL_{i,k})\\
&=
\sum_{j,k} \iL_{j,j} L_{j,k} \iL_{k,k}
\end{align*}
which in particular does not depend upon $i$: the diagonal terms of $R
L R$ are all equal.  \end{proof}

Suppose that we add an extra $(n+1)$\st node to the graph where the
conductance between nodes $j$ and $n+1$ is $\eps(1+\frac12\sum_k
L_{j,k} R_{j,k})$, where $\eps\approx 0$.  (Some of these conductances
may be negative.)  From \lref{LR} we have
\begin{align*}
\delta_{i,j}+\frac12\sum_k L_{j,k} R_{k,i} &=
1+\frac12 \sum_k L_{j,k} R_{j,k} \\
1 = \sum_j\left[\delta_{i,j}+\frac12\sum_k L_{j,k} R_{k,i}\right] &=
\sum_j\left[1+\frac12 \sum_k L_{j,k} R_{j,k}\right],
\end{align*}
so adding up the conductances of these new edges gives $\eps$.
If we set node $n+1$ to
be at $1$ volt and node $i$ to be at $0$ volts, then to first order, each of
the nodes $1,\dots,n$ be nearly at $0$ volts.  Then the current flowing
from node $n+1$ to node $j$ is
$(1+o(1))\eps(\delta_{i,j}+\frac12\sum_k L_{j,k} R_{k,i})$, and the
total current from node $n+1$ is $(1+o(1))\eps$.  Let $v$ denote the
voltages of the first $n$ nodes.  Now we view the first $n$ nodes as
a circuit to which we apply the voltages~$v$.  The current flowing into
node $j\neq i$ is $(1+o(1))\frac\eps 2\sum_k L_{j,k} R_{k,i}$, and
the current flowing into node $i$ is
$(1+o(1))\eps(1+\frac12\sum_k L_{i,k} R_{k,i})-(1+o(1))\eps$
(the first term is the current from the $(n+1)$\st node to
node~$i$, and the second term is the current flowing out of node~$i$,
 i.e., the total current flowing in from node~$(n+1)$).
To summarize, we have $$L v = (1+o(1))\frac12\eps L
R \delta_i,$$ where $\delta_i$ is the $i$\th basis vector.
We would like to cancel the $L$'s from this equation,
but the equation only determines $v$ up to an additive constant:
$v=\frac12 \eps R \delta_i+o(\eps)+\text{const}$.
But $v_i=0$ and $R_{i,i}=0$, so the additive constant is zero.
Knowing the voltages $v_j=\frac12 \eps R_{i,j}+o(\eps)$ allows
us to compute the currents to second order: the total current from
node~$n+1$ is
\begin{align*}
\sum_j \eps \left[\delta_{i,j} + \frac12\sum_k L_{j,k} R_{k,i}\right] \left(1-\frac\eps2 R_{i,j} + o(\eps)\right)
&= \eps - \frac{\eps^2}{4} \sum_{j,k} R_{i,j} L_{j,k} R_{k,i} + o(\eps^2)\\
&= \eps - \frac{\eps^2}{4} Q + o(\eps^2)
\end{align*}
where $Q$ is the diagonal entry of $RLR$.  Let $\tilde R$
denote the resistances in the extended graph; we have
$$ \tilde R_{i,n+1} = 1/\eps + Q/4 + o(1).$$

Let $\tilde L$ be the $L$-matrix of the enlarged graph, $\tilde L'$ be
the submatrix obtained by deleting the $(n+1)$\st vertex, and $\tilde
L'^{-1}$ be the inverse of $\tilde L'$ (sometimes extended to have an
all-$0$ $(n+1)$\st row and column).  The ratio of spanning trees to the
uncrossing is
\begin{align*}
 \lim_{\eps\to0} \frac1\eps \det (-\tilde L')
&= \lim_{\eps\to0} \frac1\eps \frac{1}{\det (-\tilde L'^{-1})} \\
\intertext{now recall $(-\tilde L'^{-1})_{i,j} = (\tilde R_{i,n+1}+\tilde R_{j,n+1}-\tilde R_{i,j})/2$, so}
&= \lim_{\eps\to0} \frac1\eps \frac{1}{\det [1/\eps + Q/4 - R_{i,j}/2 + o(1)]^{i=1,\dots,n}_{j=1,\dots,n}}. \\
\intertext{This determinant is a linear function of $t=1/\eps+Q/4$, so we may rewrite this limit as}
&= \frac{1}{[t]\det [t - {\textstyle\frac12}R_{i,j}]^{i=1,\dots,n}_{j=1,\dots,n}}.
\end{align*}

Let $A$ and $B$ be two subsets of the nodes.  We have
\begin{align}
\det[L_{i,j}]^{i\in A}_{j\in B}
 =& \lim_{\eps\to0} \det[\tilde L'_{i,j}]^{i\in A}_{j\in B}\nonumber \\
 =& \lim_{\eps\to0} \det\tilde L' \det[\tilde L'^{-1}_{j,i}]^{j\in B^c}_{i\in A^c}
 \times (-1)^{\sum_{a\in A}a+\sum_{b\in B} b} \nonumber \\
 =& \lim_{\eps\to0} \frac{\eps}{[t]\det [t - {\textstyle\frac12}R_{i,j}]^{i=1,\dots,n}_{j=1,\dots,n}} \det[-1/\eps - Q/4 + R_{i,j}/2 + o(1)]^{j\in B^c}_{i\in A^c}\nonumber\\
 &\ \ \ \ \ \times (-1)^{\sum_{a\in A}a+\sum_{b\in B} b} \nonumber\\
 =& \frac{[t]\det[-t + {\textstyle\frac12}R_{i,j}]^{j\in B^c}_{i\in A^c}}{[t]\det [t - {\textstyle\frac12}R_{i,j}]^{i=1,\dots,n}_{j=1,\dots,n}}
\times (-1)^{\sum_{a\in A}a+\sum_{b\in B} b},\label{Rform1}
\end{align}
where in the above, the rows and columns in~$A$, $B$, $A^c$, and~$B^c$
are arranged in sorted order.

Note that \eqref{Rform1} allows us to rewrite any minor of the $L$
matrix in terms of the pairwise resistances between the nodes.
Since the denominator of the
right-hand side of \eqref{Rform1} is $Z(\unc)/Z(\tree)$, we have
\begin{equation}\label{Rform}
\frac{Z(\unc)}{Z(\tree)}\det[L_{i,j}]^{i\in A}_{j\in B}
 = [t]\det[-t + {\textstyle\frac12}R_{i,j}]^{j\in B^c}_{i\in A^c}
\times (-1)^{\sum_{a\in A}a+\sum_{b\in B} b}.
\end{equation}
If $A=B^c$, then \eqref{Rform} simplifies to yield
\begin{theorem} \label{LtoR}
  If $A$ and $B$ are disjoint and equinumerous sets of nodes and
  $A\cup B$ is the set of all nodes, then
$$
\frac{Z(\unc)}{Z(\tree)}\det[L_{i,j}]^{i\in A}_{j\in B}
 = [t]\det[t - {\textstyle\frac12}R_{i,j}]^{i\in A}_{j\in B}.
$$
\end{theorem}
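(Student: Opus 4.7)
The plan is to deduce \tref{LtoR} as a direct specialization of the already-established formula \eqref{Rform}, performing careful sign bookkeeping to absorb the two minus-sign factors that appear along the way. The essential input is the identity
\[
\frac{Z(\unc)}{Z(\tree)}\det[L_{i,j}]^{i\in A}_{j\in B} = [t]\det\!\left[-t + {\textstyle\frac12}R_{i,j}\right]^{j\in B^c}_{i\in A^c}\cdot(-1)^{\sum_{a\in A}a+\sum_{b\in B}b},
\]
which holds for arbitrary $A, B$ with $|A|=|B|$.

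First I would substitute the hypothesis $A\cup B = \{1,\dots,n\}$, $A\cap B = \emptyset$, which yields $B^c = A$ and $A^c = B$. Under this substitution, the matrix on the right-hand side has rows indexed by $A$ and columns indexed by $B$, with entry $-t + \tfrac12 R_{p,q}$ at row position $p\in A$, column position $q\in B$ (after using the symmetry $R_{p,q}=R_{q,p}$ that follows from the definition $R_{i,j} = -(\delta_i - \delta_j)^T L^{-1}(\delta_i - \delta_j)$). Thus the matrix becomes $[-t + \tfrac12 R_{i,j}]^{i\in A}_{j\in B}$.

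Next I would handle the negation of the matrix. Since the matrix is $|A|\times|A|$, negating all its entries multiplies the determinant by $(-1)^{|A|}$, and this multiplicative constant passes through the $[t]$-coefficient extractor. Therefore
\[
[t]\det\!\left[-t + {\textstyle\frac12}R_{i,j}\right]^{i\in A}_{j\in B} = (-1)^{|A|}\,[t]\det\!\left[t - {\textstyle\frac12}R_{i,j}\right]^{i\in A}_{j\in B}.
\]
Finally, I would combine this with the sign $(-1)^{\sum_{a\in A}a+\sum_{b\in B}b}$. Since $A\sqcup B = \{1,\dots,n\}$, the exponent equals $n(n+1)/2$, and using $n = 2|A|$ we have $n(n+1)/2 = |A|(2|A|+1) \equiv |A|\pmod{2}$. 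Hence the two sign factors multiply to $(-1)^{2|A|}=+1$, and the identity reduces exactly to the statement of \tref{LtoR}.

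There is essentially no conceptual obstacle here: the analytic work has already been done in deriving \eqref{Rform1} and \eqref{Rform} via the auxiliary added node and the Schur-type identity. The only care needed is in the sign accounting, and in particular in verifying that the parity of $n(n+1)/2$ agrees with the parity of $|A|$ when $n=2|A|$; this is the step most prone to off-by-sign errors, so I would write it out explicitly rather than leave it implicit.
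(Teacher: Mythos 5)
Your proposal is correct and follows the same route as the paper: Theorem \ref{LtoR} is obtained there precisely by specializing \eqref{Rform} to the case $B^c=A$, $A^c=B$ (the paper states that \eqref{Rform} ``simplifies to yield'' the theorem), and your explicit sign bookkeeping --- using symmetry of $R$, the factor $(-1)^{|A|}$ from negating the $|A|\times|A|$ matrix, and $(-1)^{n(n+1)/2}=(-1)^{|A|}$ when $n=2|A|$ --- is exactly the simplification the paper leaves implicit. No gaps; the verification that the two signs cancel is sound.
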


\pdfbookmark[1]{References}{bib}
\bibliographystyle{hmralpha}
\bibliography{bc}

\end{document}